\documentclass[11pt,reqno]{amsart}

\usepackage{amssymb}
\usepackage{amsmath}
\usepackage{enumerate}
\usepackage{bbm}
\usepackage{esint}
\usepackage{a4wide}

\usepackage{bookmark}
\usepackage{hyperref}
\hypersetup{pdfstartview={FitH}}

%\addtolength{\hoffset}{-0.5cm}
%\addtolength{\textwidth}{1cm}
\addtolength{\voffset}{-0.75cm}
\addtolength{\textheight}{2cm}

\newtheorem{theorem}{Theorem}
\newtheorem{proposition}[theorem]{Proposition}
\newtheorem{lemma}[theorem]{Lemma}

\DeclareFontFamily{U}{mathx}{\hyphenchar\font45}
\DeclareFontShape{U}{mathx}{m}{n}{
<5> <6> <7> <8> <9> <10>
<10.95> <12> <14.4> <17.28> <20.74> <24.88>
mathx10
}{}
\DeclareSymbolFont{mathx}{U}{mathx}{m}{n}
\DeclareFontSubstitution{U}{mathx}{m}{n}
\DeclareMathAccent{\widecheck}{0}{mathx}{"71}
%\DeclareMathAccent{\wideparen}{0}{mathx}{"75}

\numberwithin{equation}{section}

\begin{document}

\title[On side lengths of corners in positive density subsets]{On side lengths of corners in positive density subsets of the Euclidean space}

\author[P. Durcik]{Polona Durcik}
\address{Polona Durcik, Universit\"at Bonn, Endenicher Allee 60, 53115 Bonn, Germany}
\email{durcik@math.uni-bonn.de}

\author[V. Kova\v{c}]{Vjekoslav Kova\v{c}}
\address{Vjekoslav Kova\v{c}, Department of Mathematics, Faculty of Science, University of Zagreb, Bijeni\v{c}ka cesta 30, 10000 Zagreb, Croatia}
\email{vjekovac@math.hr}

\author[L. Rimani\'{c}]{Luka Rimani\'{c}}
\address{Luka Rimani\'{c}, School of Mathematics, University of Bristol, University Walk, Bristol BS8 1TW, United Kingdom}
\email{luka.rimanic@bristol.ac.uk}

\date{\today}

\subjclass[2010]{Primary
05D10; % Combinatorics - Extremal combinatorics - Ramsey theory
Secondary
11B30, % Number theory - Sequences and sets - Arithmetic combinatorics
42B20} % Harmonic analysis on Euclidean spaces - Harmonic analysis in several variables - Singular and oscillatory integrals

\begin{abstract}
We generalize a result by Cook, Magyar, and Pramanik \cite{CMP15} on three-term arithmetic progressions in subsets of $\mathbb{R}^d$ to corners in subsets of $\mathbb{R}^d\times\mathbb{R}^d$. More precisely, if $1<p<\infty$, $p\neq 2$, and $d$ is large enough, we show that an arbitrary measurable set $A\subseteq\mathbb{R}^d\times\mathbb{R}^d$ of positive upper Banach density contains corners $(x,y)$, $(x+s,y)$, $(x,y+s)$ such that the $\ell^p$-norm of the side $s$ attains all sufficiently large real values. Even though we closely follow the basic steps from \cite{CMP15}, the proof diverges at the part relying on harmonic analysis. We need to apply a higher-dimensional variant of a multilinear estimate from \cite{DKST16}, which we establish using the techniques from \cite{DKST16} and \cite{DKT16}.
\end{abstract}

\maketitle

%%%%%%%%%%%%%%%%%%%%%

\section{Introduction}
The \emph{upper Banach density} of a set $A \subseteq \mathbb{R}^d$ is defined as
\[
\overline{\delta}_d(A) := \limsup_{N\rightarrow \infty} \sup_{x\in\mathbb{R}^d} \frac{\left|A\cap (x+[0,N]^d)\right|}{\left|x+[0,N]^d\right|},
\]
where $|\cdot|$ denotes the $d$-dimensional Lebesgue measure, so that $|x+[0,N]^d| = N^d$.
If $d\geq 2$ and $\overline{\delta}_d(A)>0$, then there exists a sufficiently large $\lambda_0(A)>0$ such that for any real number $\lambda\geq\lambda_0(A)$ the set $A$ contains points $x$ and $x+s$ with $\|s\|_{\ell^2}=\lambda$. This fact was shown independently by Bourgain \cite{Bou86}, Falconer and Marstrand \cite{FM86}, and Furstenberg, Katznelson, and Weiss \cite{FKW90}. Here $\|\cdot\|_{\ell^2}$ denotes the Euclidean norm. More generally, we denote the \emph{$\ell^p$-norm} on $\mathbb{R}^d$ by
\[
\|s\|_{\ell^p} := \begin{cases}
\Big(\sum_{i=1}^d\limits |s_i|^p\Big)^{1/p} & \text{for } 1\leq p<\infty, \\
\max_{1\leq i\leq d}\limits |s_i| & \text{for } p=\infty,
\end{cases}
\]
if $s=(s_1,\ldots,s_d)$. It is another observation by Bourgain \cite{Bou86} that the same statement fails if we replace the trivial pattern $x$, $x+s$ by a \emph{$3$-term arithmetic progression}
\[
x,\ x+s,\ x+2s.
\]
Indeed, the set $A$ obtained as a union of the annuli $n-1/10 \leq \|x\|_{\ell^2}^2 \leq n+1/10$ as $n$ runs over the positive integers clearly has density $\overline{\delta}_d(A)>0$, but if $x,s\in\mathbb{R}^d$ are such that $x$, $x+s$, $x+2s\in A$, then the parallelogram law
\[
2\|x+s\|_{\ell^2}^2 + 2\|s\|_{\ell^2}^2 = \|x+2s\|_{\ell^2}^2 + \|x\|_{\ell^2}^2
\]
implies that $n-2/5 \leq 2\|s\|_{\ell^2}^2 \leq n+2/5$ for some integer $n$. Therefore, the $\ell^2$-norms of the common differences $s$ of the $3$-term progressions in $A$ cannot attain values in the set
\[
\bigcup_{n=1}^{\infty} \Big(\sqrt{\frac{5n-3}{10}},\sqrt{\frac{5n-2}{10}}\Big),
\]
which contains arbitrarily large numbers.

An interesting phenomenon occurs in large dimensions if one replaces the $\ell^2$-norm by other $\ell^p$-norms. A recent result by Cook, Magyar, and Pramanik \cite{CMP15} sheds new light on the Euclidean density theorems by establishing that a set of positive upper Banach density still contains $3$-term arithmetic progressions such that the $\ell^p$-norms of their common differences attain all sufficiently large values when $1<p<\infty$ and $p\neq 2$.

\begin{theorem}[from \cite{CMP15}]\label{thm:cmptheorem}
For any $p\in(1,2)\cup(2,\infty)$ there exists $d_p \geq 2$ such that for every integer $d\geq d_p$ the following holds. For any measurable set $A\subseteq\mathbb{R}^d$ satisfying $\overline{\delta}_d(A)>0$ one can find $\lambda_0(A)>0$ having the property that for any real number $\lambda\geq\lambda_0(A)$, there exist $x,s\in\mathbb{R}^d$ such that $x$, $x+s$, $x+2s\in A$ and $\|s\|_{\ell^p}=\lambda$.
\end{theorem}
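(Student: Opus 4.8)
\noindent\emph{Plan of proof.}
Write $\delta:=\overline{\delta}_d(A)>0$. Since the hypothesis and conclusion are invariant under translations and dilations of $A$, the first step is to reduce to a finitary statement: one seeks $d_p$ (depending only on $p$) such that for every $d\geq d_p$ there is $\lambda_0=\lambda_0(\delta)$ with the property that, whenever $\lambda\geq\lambda_0$ and $N$ is large enough in terms of $\lambda$ and $\delta$, every measurable $A\subseteq Q:=[0,N]^d$ with $|A|\geq\tfrac12\delta N^d$ contains $x,x+s,x+2s\in A$ with $\|s\|_{\ell^p}=\lambda$. Fix a nonnegative Schwartz bump $\varphi$ with $\int_{\mathbb{R}^d}\varphi=1$, write $\varphi_r:=r^{-d}\varphi(\cdot/r)$, and replace $\mathbf{1}_A$ by its mollification $f:=\mathbf{1}_A*\varphi_\rho$ at a tiny scale $\rho\ll\lambda$, so that $f$ is continuous, $0\leq f\leq1$ and $\int f=|A|$; a near-progression detected by $f$ is upgraded to an honest one in $A$ with side $\ell^p$-norm exactly $\lambda$ by a Lebesgue-density and compactness argument, which is the one place where $\lambda\geq\lambda_0$ is genuinely used. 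Letting $\sigma$ be the normalized surface measure on $\{s\in\mathbb{R}^d:\|s\|_{\ell^p}=1\}$ and $\sigma_\lambda$ its dilation to radius $\lambda$, it then suffices to prove the lower bound
\[
\mathcal{N}_\lambda(f)\ :=\ \int_{\mathbb{R}^d}\int_{\mathbb{R}^d} f(x)\,f(x+s)\,f(x+2s)\,\mathrm{d}\sigma_\lambda(s)\,\mathrm{d}x\ \geq\ c(\delta)\,N^d .
\]

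Next I would split the spherical average into a smooth piece and an oscillatory remainder. For a kernel $K$ on $\mathbb{R}^d$ write $\Lambda_K(g_1,g_2,g_3):=\iint g_1(x)g_2(x+s)g_3(x+2s)K(s)\,\mathrm{d}s\,\mathrm{d}x$, so $\mathcal{N}_\lambda(f)=\Lambda_{\sigma_\lambda}(f,f,f)$. With $\eta>0$ a small parameter (to be chosen in terms of $\delta$), split
\[
\mathcal{N}_\lambda(f)\ =\ \Lambda_{\sigma_\lambda*\varphi_{\eta\lambda}}(f,f,f)\ +\ \Lambda_{\sigma_\lambda-\sigma_\lambda*\varphi_{\eta\lambda}}(f,f,f).
\]
The first term --- the \emph{structured} term --- involves the genuine function $\sigma_\lambda*\varphi_{\eta\lambda}$, essentially supported on a thin annulus about the dilated $\ell^p$-sphere, and counts $3$-progressions in $A$ with common difference in that annulus. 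I would bound it below by $c(\delta)N^d$, valid once $N$ is large compared to $\eta\lambda$ and $\delta$, by combining a quantitative form of Roth's theorem in $\mathbb{R}^d$ (equivalently, an $L^2$ density-increment over subcubes of $Q$) with the analytic estimates of the next paragraph applied to the difference between $\sigma_\lambda*\varphi_{\eta\lambda}$ and smoother (e.g.\ ball-average) kernels, exactly as in \cite{CMP15}. It is precisely here that $p\neq2$ is essential: the lower bound for the structured term fails for $p=2$ --- for the union-of-annuli set of the introduction its value vanishes for arbitrarily large $\lambda$ --- and one uses Fourier-analytic features of the $\ell^p$-sphere with no counterpart at $p=2$.

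The core of the argument is the estimate for the oscillatory term $\Lambda_{\sigma_\lambda-\sigma_\lambda*\varphi_{\eta\lambda}}(f,f,f)$, which must be shown to be at most $\tfrac12 c(\delta)N^d$ in absolute value. The kernel $K:=\sigma_\lambda-\sigma_\lambda*\varphi_{\eta\lambda}$ has mean zero and symbol $\widehat{K}(\xi)=\widehat{\sigma}(\lambda\xi)\bigl(1-\widehat{\varphi}(\eta\lambda\xi)\bigr)$; using the quantitative Fourier decay
\[
|\widehat{\sigma}(\omega)|\ \lesssim_{p,d}\ (1+|\omega|)^{-\gamma(p,d)},\qquad \gamma(p,d)\to\infty\ \text{ as }\ d\to\infty\quad(p\neq2),
\]
of the surface measure of the $\ell^p$-sphere, together with the vanishing of $1-\widehat{\varphi}$ at the origin, one checks that $\widehat{K}$ (and its rescaled derivatives) is $\lesssim\eta$ in a scale-invariant Mihlin--H\"ormander-type norm. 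To convert this smallness of $\widehat{K}$ into smallness of the trilinear form one needs a multilinear estimate: a bound of the shape $|\Lambda_K(g_1,g_2,g_3)|\lesssim\|\widehat{K}\|_{\ast}\,\|g_1\|_{L^{3}}\|g_2\|_{L^{3}}\|g_3\|_{L^{3}}$ with $\|\cdot\|_{\ast}$ such a norm recording both the oscillation and the cancellation of $K$ (the naive bound by $\|\widehat{K}\|_{L^\infty}$ being false). For the $3$-progression pattern $x,x+s,x+2s$ an estimate of this type is available, of the strength of the $L^p$ theory of the bilinear Hilbert transform and of maximally truncated multiplier operators. This is exactly the point at which the present paper's generalization --- from $3$-progressions in $\mathbb{R}^d$ to corners $(x,y),(x+s,y),(x,y+s)$ in $\mathbb{R}^d\times\mathbb{R}^d$ --- departs from \cite{CMP15}: there the required bound becomes a higher-dimensional variant of the ``twisted paraproduct''-type estimate of \cite{DKST16}, established via the techniques of \cite{DKST16} and \cite{DKT16}. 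Granting such an estimate, since $\|f\|_{L^3}^3\leq\int f=|A|\lesssim\delta N^d$ the oscillatory term is $\lesssim\eta\,\delta\,N^d$; choosing $\eta$ small depending on $\delta$, and then $d_p$ large depending only on $p$ (so that the decay exponent $\gamma(p,d)$ exceeds a fixed threshold, which is what makes the gain $\eta$ linear and lets it absorb the loss from Roth's theorem uniformly in $\delta$), yields $|\Lambda_{\sigma_\lambda-\sigma_\lambda*\varphi_{\eta\lambda}}(f,f,f)|\leq\tfrac12 c(\delta)N^d$ and hence $\mathcal{N}_\lambda(f)\geq\tfrac12 c(\delta)N^d>0$.

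I expect the main obstacle to be the multilinear estimate for the oscillatory term: in the $3$-progression case it is carried by known bilinear-Hilbert-transform-type technology, but in the corners setting it is the higher-dimensional twisted-paraproduct bound that absorbs essentially all of the technical weight. A secondary difficulty is to make precise the structured-term lower bound --- the comparison of the thin-annulus kernel with smoother averages, and the way $p\neq2$ enters there --- as well as the mollification-and-extraction reduction producing a configuration of side length exactly $\lambda$.
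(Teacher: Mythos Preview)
The paper does not prove Theorem~\ref{thm:cmptheorem} directly: it is quoted from \cite{CMP15}, and within this paper it is obtained only as a corollary of Theorem~\ref{thm:cornerstheorem} via the set $\widetilde{A}=\{(x,y):y-x\in A\}$. Your proposal is therefore an attempt to reconstruct the argument of \cite{CMP15} itself. The broad ingredients you list (localization to a cube, sphere-vs-smooth splitting, a Roth-type lower bound, Fourier decay of the $\ell^p$-sphere) are the right ones, but the architecture you describe misassigns the two error terms and, more seriously, replaces the actual multi-scale argument by a single-scale one that does not go through.

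The comparison you call the ``oscillatory term'', namely $\sigma_\lambda$ versus the thin annulus $\sigma_\lambda\ast\varphi_{\eta\lambda}$, is the \emph{easy} error. It is controlled at each fixed $\lambda$ by the generalized von Neumann inequality (three Cauchy--Schwarz steps) together with the bound $\|\sigma_\lambda-\omega_\lambda^\varepsilon\|_{\textup{U}^3}\lesssim\lambda^{-d/2}\varepsilon^{d\gamma_p-1}$; no bilinear-Hilbert-transform or twisted-paraproduct technology enters here, and this is precisely the place where $p\neq 2$ is used (through the oscillatory-integral decay of $\widehat{\sigma}$), not in the Roth lower bound as you suggest. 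The genuinely hard error is the one you tuck inside the ``structured'' term: the comparison of the thin annulus $\omega_\lambda^\varepsilon$ with the thick weight $\omega_\lambda^1$, i.e.\ the quantity $\mathcal{E}_\lambda^\varepsilon=\mathcal{M}_\lambda^\varepsilon-c_1(\varepsilon)\mathcal{M}_\lambda^1$. The kernel $k_\lambda^\varepsilon=\omega_\lambda^\varepsilon-c_1(\varepsilon)\omega_\lambda^1$ lives at a single scale, so $\widehat{k_\lambda^\varepsilon}$ is not small in any Mihlin--H\"ormander norm and the ``analytic estimates of the next paragraph'' do not apply to it; there is no single-scale bound of the form $|\mathcal{E}_\lambda^\varepsilon|\leq\tfrac12 c(\delta)N^d$.

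What \cite{CMP15} actually does---mirrored for corners in Propositions~\ref{prop:m1large}--\ref{prop:ebounded} and the proof of Theorem~\ref{thm:cornerstheorem} here---is a square-function/contradiction argument. One shows $\sum_{j=1}^J|\mathcal{E}_{\lambda_j}^\varepsilon|^2\lesssim_\varepsilon N^{2d}$ over an arbitrary lacunary sequence $(\lambda_j)$, using that $\sum_j k_{\lambda_j}^\varepsilon(u)k_{\lambda_j}^\varepsilon(v)$ is a Calder\'on--Zygmund kernel; this is where the singular-integral estimate (in \cite{CMP15}, standard modulation-invariant theory; in this paper, Theorem~\ref{thm:analytictheorem}) is spent. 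If infinitely many scales were bad, the Roth lower bound on $\mathcal{M}_{\lambda_j}^1$ and the single-scale $\textup{U}^3$ comparison would force $|\mathcal{E}_{\lambda_j}^\varepsilon|\gtrsim N^d$ for each $j$, contradicting the $\ell^2$ bound for large $J$. Your single-scale scheme, which aims to show $\mathcal{N}_\lambda>0$ directly for every $\lambda\geq\lambda_0$, would need a pointwise-in-$\lambda$ control of $\mathcal{E}_\lambda^\varepsilon$ that is not available. (The mollification step you describe, passing from near-progressions to exact ones, is also unnecessary: one works with $\mathcal{N}_\lambda(\mathbbm{1}_A)$ directly.)
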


The authors of \cite{CMP15} place this result in the context of the Euclidean Ramsey theory and demonstrate that it is sharp with regard to the exponent $p$. Indeed, measuring the common differences in the $\ell^1$ or the $\ell^\infty$-norm allows for quite straightforward counterexamples. They only leave the optimal value of the dimension threshold $d_p$ as an open problem.

The aim of this paper is a generalization of Theorem~\ref{thm:cmptheorem} to so-called \emph{corners}, which are patterns in $\mathbb{R}^d\times\mathbb{R}^d$ of the form
\begin{equation}\label{eq:defcorner}
(x,y),\ (x+s,y),\ (x,y+s)
\end{equation}
for some $x,y,s\in\mathbb{R}^d$, $s\neq 0$. The fact that any subset of $\mathbb{Z}\times\mathbb{Z}$ of positive upper density contains a corner was proved by Ajtai and Szemer\'{e}di \cite{AS74}, while the first ``reasonable'' quantitative upper bounds (of the form $n^2/(\log\log n)^c$ with $c>0$) for subsets of $\{1,\ldots,n\}\times\{1,\ldots,n\}$ without corners are due to Shkredov \cite{Shk06a}, \cite{Shk06b}.

We are interested in finding corners exhibiting all sufficiently large side lengths in positive upper Banach density subsets of $\mathbb{R}^d\times\mathbb{R}^d$. Here is the main result of this paper.

\begin{theorem}\label{thm:cornerstheorem}
For any $p\in(1,2)\cup(2,\infty)$ there exists $d_p \geq 2$ such that for every integer $d\geq d_p$ the following holds. For any measurable set $A\subseteq\mathbb{R}^d\times\mathbb{R}^d$ satisfying $\overline{\delta}_{2d}(A)>0$ one can find $\lambda_0(A)>0$ with the property that for any real number $\lambda\geq\lambda_0(A)$, there exist $x,y,s\in\mathbb{R}^d$ such that $(x,y)$, $(x+s,y)$, $(x,y+s)\in A$ and $\|s\|_{\ell^p}=\lambda$.
\end{theorem}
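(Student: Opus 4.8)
The plan is to follow the basic steps of \cite{CMP15}, diverging from it precisely at the harmonic-analytic core. Write $\delta:=\overline{\delta}_{2d}(A)>0$. Unwinding the definition of the upper Banach density and rescaling, it suffices to fix $\lambda$, produce a cube $Q\subseteq\mathbb{R}^{2d}$ of some side length $N$ with $N/\lambda$ as large as we wish and $|A\cap Q|\geq(\delta/2)|Q|$, and then locate inside $A\cap Q$ a corner of side length exactly $\lambda$. The basic object is the trilinear \emph{corner form}
\begin{equation*}
\mathcal{N}_{\nu}(f_0,f_1,f_2) := \int_{\mathbb{R}^{d}}\int_{\mathbb{R}^{d}}\int_{\mathbb{R}^{d}} f_0(x,y)\,f_1(x+s,y)\,f_2(x,y+s)\;\mathrm{d}\nu(s)\,\mathrm{d}x\,\mathrm{d}y,
\end{equation*}
where $\nu$ is a finite signed measure on $\mathbb{R}^{d}$; for the configuration \eqref{eq:defcorner} one takes $\nu=\sigma_\lambda$, the normalized surface measure on the $\ell^p$-sphere $\{s\in\mathbb{R}^{d}:\|s\|_{\ell^p}=\lambda\}$, set up via the coarea formula as in \cite{CMP15}. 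Everything reduces to the strict inequality $\mathcal{N}_{\sigma_\lambda}(\mathbbm{1}_{A\cap Q},\mathbbm{1}_{A\cap Q},\mathbbm{1}_{A\cap Q})>0$.

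Fix an even nonnegative Schwartz bump $\psi$ with $\int\psi=1$ and put $\psi_t:=t^{-d}\psi(\cdot/t)$. For a small parameter $\varepsilon\in(0,1)$ split
\begin{equation*}
\sigma_\lambda \;=\; \underbrace{\sigma_\lambda*\psi_{\varepsilon\lambda}}_{\text{structured}} \;+\; \underbrace{\bigl(\sigma_\lambda-\sigma_\lambda*\psi_{\varepsilon\lambda}\bigr)}_{\text{oscillatory}}.
\end{equation*}
The structured summand is a smooth bump adapted to the annulus $\bigl\{\bigl|\,\|s\|_{\ell^p}-\lambda\bigr|\lesssim\varepsilon\lambda\bigr\}$ and is treated as in \cite{CMP15}: writing $\mathbbm{1}_{A\cap Q}=\delta+g$ and expanding multilinearly, the purely constant contribution equals $\delta^3|Q|$ up to boundary errors which disappear as $N/\lambda\to\infty$, and every term containing a factor $g$ is, by a generalized von Neumann inequality for corners, dominated by an appropriately normalized Gowers box norm of $g$ times a constant that depends only on $\|\sigma_\lambda*\psi_{\varepsilon\lambda}\|\le1$; a density increment (if the box norm is large, $A$ correlates with a structure of low complexity and its density rises on a subcube, which cannot recur indefinitely) then isolates a positive main term $\ge c(\delta)|Q|$. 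Equivalently, the positivity of this term can be extracted from the Ajtai--Szemer\'edi theorem \cite{AS74} by a Varnavides-type averaging over a discretization of $Q$. To reach \emph{every} large $\lambda$, and not just almost every one, all of this is combined with Bourgain's three-scale device \cite{Bou86}.

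The crux, and the point where the argument parts ways with \cite{CMP15}, is the oscillatory summand: one must prove
\begin{equation*}
\bigl|\mathcal{N}_{\sigma_\lambda-\sigma_\lambda*\psi_{\varepsilon\lambda}}(\mathbbm{1}_{A\cap Q},\mathbbm{1}_{A\cap Q},\mathbbm{1}_{A\cap Q})\bigr|\;\le\;\varepsilon^{\gamma}\,|Q|
\end{equation*}
for some $\gamma=\gamma(d,p)>0$, so that taking $\varepsilon=\varepsilon(\delta)$ small makes it negligible against $c(\delta)|Q|$; the lower threshold $\lambda\ge\lambda_0(A)$ --- unavoidable, since a set of positive density may avoid corners of a prescribed \emph{small} side length --- enters at the discretization step. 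Since a corner is not a convolution pattern, the Fourier transform does not diagonalize $\mathcal{N}_\nu$, and the correct substitute for the Fourier-analytic estimate of \cite{CMP15} is a \emph{quantitative} generalized von Neumann bound: $|\mathcal{N}_\nu(f_0,f_1,f_2)|$ is controlled by a Gowers box norm of one of the functions and $L^\infty$ norms of the others, with an explicit power of a parameter measuring the oscillation of $\nu=\sigma_\lambda-\sigma_\lambda*\psi_{\varepsilon\lambda}$. I would obtain this by the Gowers--Cauchy--Schwarz (``telescoping'') scheme of \cite{DKST16} and \cite{DKT16}: two applications of Cauchy--Schwarz in the two entangled groups of variables dominate $|\mathcal{N}_\nu|$ by a nonnegative box-type form in which the kernel $\nu$ occurs quadratically, after which one feeds in the polynomial decay of $\widehat{\sigma_\lambda}$, using crucially that $1-\widehat{\psi}(\varepsilon\lambda\,\cdot)$ vanishes at the origin and hence turns that decay into a genuine power of $\varepsilon$ in every frequency range. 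The one new analytic input is a higher-dimensional version of the multilinear estimate of \cite{DKST16}, whose functions there live on a product of low-dimensional factors; I would rerun that proof with the factors replaced by copies of $\mathbb{R}^{d}$ (in our case the kernel is a measure supported on a single such factor, a special case of that framework), while tracking the dependence of all constants on $d$.

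Two points will demand the most care and are what pin down $d_p$. First, the Fourier transform of the $\ell^p$-sphere decays only polynomially, at a rate $\rho(d,p)$ that deteriorates near the coordinate subspaces, where the sphere is flat and, for $p\notin 2\mathbb{Z}$, not even smooth; via a dyadic decomposition and stationary phase, as in \cite{CMP15} and the literature on $\ell^p$-balls, one must extract a clean bound $|\widehat{\sigma_\lambda}(\xi)|\lesssim(1+|\lambda\xi|)^{-\rho(d,p)}$ with $\rho(d,p)\to\infty$ as $d\to\infty$. Second, the Cauchy--Schwarz iteration and the additional integration over shift variables built into the box-norm form each weaken the usable decay, so only for $d$ large enough --- which is exactly where $d\ge d_p$ is spent --- does $\rho(d,p)$ still outweigh all of these losses and leave a net positive power of $\varepsilon$. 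I expect that, the conceptual scheme being already available in \cite{CMP15}, \cite{DKST16} and \cite{DKT16}, the bulk of the remaining effort is this quantitative bookkeeping: carrying the power of $\varepsilon$ through the box-norm machinery and determining how $d_p$ depends on $p$.
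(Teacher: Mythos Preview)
Your overall decomposition into a structured and an oscillatory piece is right, and so is the instinct that the techniques of \cite{DKST16} and \cite{DKT16} supply the missing harmonic analysis. But you have placed that analysis in the wrong step, and the step you call ``the crux'' is in fact the easy one.

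The single-scale oscillatory bound
\[
\bigl|\mathcal{N}_{\sigma_\lambda-\sigma_\lambda\ast\psi_{\varepsilon\lambda}}(f,f,f)\bigr|\lesssim \varepsilon^{\gamma}\,N^{2d}
\]
does \emph{not} require any new entangled multilinear estimate. It follows exactly as in \cite{CMP15}: three applications of Cauchy--Schwarz give the generalized von Neumann inequality for corners,
\[
\Bigl|\int f(x,y)f(x+s,y)f(x,y+s)\,g(s)\,ds\,dx\,dy\Bigr|\lesssim N^{2d}\lambda^{d/2}\|g\|_{\textup{U}^3},
\]
and the estimate $\|\sigma_\lambda-\omega_\lambda^\varepsilon\|_{\textup{U}^3}\lesssim\lambda^{-d/2}\varepsilon^{d\gamma_p-1}$ is already proved in \cite{CMP15} and depends only on the measure, not on the pattern. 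Your proposed route (two Cauchy--Schwarz steps, then feed in decay of $\widehat{\sigma_\lambda}$) stalls: after squaring you arrive at the entangled form
\[
\int F(x+u,y)G(x,y+u)F(x+v,y)G(x,y+v)\,\nu(u)\nu(v)\,du\,dv\,dx\,dy,
\]
which is nonnegative, and any further Cauchy--Schwarz destroys the cancellation in $\nu$. The way to keep that cancellation is precisely the third Cauchy--Schwarz that lands on $\|\nu\|_{\textup{U}^3}$.

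The genuine new input is needed where you wave your hands: passing from the $\varepsilon$-smoothed count to the $\varepsilon=1$ count, equivalently running the multi-scale pigeonhole that you call ``Bourgain's three-scale device''. In the paper this is organized as a proof by contradiction over arbitrarily many lacunary scales $\lambda_1<\cdots<\lambda_J$: one squares the errors $\mathcal{E}_{\lambda_j}^\varepsilon:=\mathcal{M}_{\lambda_j}^\varepsilon-c_1(\varepsilon)\mathcal{M}_{\lambda_j}^1$, sums in $j$, and after one Cauchy--Schwarz obtains exactly the entangled form above but now with the \emph{multi-scale} Calder\'on--Zygmund kernel $K(u,v)=\sum_{j=1}^J k_{\lambda_j}^\varepsilon(u)k_{\lambda_j}^\varepsilon(v)$. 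Bounding that form uniformly in $J$ is Theorem~\ref{thm:analytictheorem}, and this is where the \cite{DKST16}/\cite{DKT16} machinery is actually deployed. Your plan to get a uniform-in-$\varepsilon$ lower bound on the structured piece directly (via a box-norm density increment) would, if it worked, bypass this step entirely---but that amounts to reproving a Shkredov-type result with the side restricted to a shell of width $\varepsilon\lambda$, which is far from routine and not what the paper does.
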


It is easy to see that Theorem~\ref{thm:cornerstheorem} implies Theorem~\ref{thm:cmptheorem}. One simply observes that if $A\subseteq\mathbb{R}^d$ has $\overline{\delta}_d(A)>0$, then the set $\widetilde{A}$ defined by
\[
\widetilde{A} := \{(x,y)\in\mathbb{R}^d\times\mathbb{R}^d : y-x\in A\}
\]
satisfies $\overline{\delta}_{2d}(\widetilde{A})>0$.
For this purpose it is convenient to change the coordinates on $\mathbb{R}^d\times\mathbb{R}^d$ to $(x',y')=(x+y,y-x)/\sqrt{2}$ and rotate the cubes $[0,N]^{2d}$ in the definition of $\overline{\delta}_{2d}(\widetilde{A})$, possibly at the cost of losing a multiplicative constant.
Moreover, any corner in $\widetilde{A}$ with side $s$ via the projection $(x,y)\mapsto y-x$ gives rise to a $3$-term arithmetic progression in $A$ with $s$ as its common difference. The same argument also enables the use of the previously mentioned counterexamples, which rule out the possibility of Theorem~\ref{thm:cornerstheorem} holding for $p=1$, $2$, or $\infty$.

We need to emphasize that our proof of Theorem~\ref{thm:cornerstheorem} closely follows the outline of \cite{CMP15}. The most significant novelty appears in the harmonic analysis part of the proof, where we need to prove an estimate for certain ``entangled'' singular multilinear forms, stated as Theorem~\ref{thm:analytictheorem} below. For previous work on patterns in sufficiently dense subsets of the Euclidean space we refer for instance to \cite{Bou86}, \cite{HLM16}, and \cite{LM16}. Bourgain \cite{Bou86} has shown that any set of positive upper Banach density in $\mathbb{R}^k$ contains isometric copies of all sufficiently large dilates of a fixed non-degenerate $k$-point (i.e.\@ $(k-1)$-dimensional) simplex; non-degeneracy being essential there. Moreover, Lyall and Magyar \cite{LM16} extended his result to Cartesian products of two non-degenerate simplices. In particular, they are able to detect patterns like $(x,y)$, $(x+s,y)$, $(x,y+t)$ with $\|s\|_{\ell^2}=\|t\|_{\ell^2}$, which have more degrees of freedom than the corners in definition \eqref{eq:defcorner}, and as such are easier to handle.

We now turn to the analytical ingredients that will be needed in the proof of Theorem~\ref{thm:cornerstheorem}.
For $1\leq p<\infty$ let $\|\cdot\|_{\textup{L}^p}$ denote the \emph{Lebesgue $\textup{L}^p$-norm} defined by
\[
\|f\|_{\textup{L}^p} := \Big(\int_{\mathbb{R}^d} |f(x)|^p dx\Big)^{1/p}
\]
and let $\textup{L}^p(\mathbb{R}^d)$ be the corresponding Banach space of a.e.-classes of measurable functions $f$ such that $\|f\|_{\textup{L}^p}<\infty$.
Denote by $\partial^\kappa f := \partial_{1}^{\kappa_1}\cdots\partial_{d}^{\kappa_d}f$ the partial derivative of a function $f\colon\mathbb{R}^d\to\mathbb{C}$ with respect to the multi-index $\kappa=(\kappa_1,\ldots,\kappa_d)$, the order of which will be written $|\kappa|:=\kappa_1+\cdots+\kappa_d$.
Finally, we use the notation $\widehat{f}$ and $\widecheck{f}$ for the Fourier transform and its inverse respectively, both initially defined for Schwartz functions $f$ by \eqref{eq:ft} and \eqref{eq:ift} below, and then extended to tempered distributions.

\begin{theorem}\label{thm:analytictheorem}
Suppose that $m\in\textup{C}^{\infty}(\mathbb{R}^{2d})$ satisfies the standard symbol estimates, i.e.\@ for any multi-index $\kappa$ there exists a constant $C_\kappa\in[0,\infty)$ such that
\begin{equation}\label{eq:symbolest}
\big| (\partial^\kappa m)(\xi,\eta) \big| \leq C_\kappa \|(\xi,\eta)\|_{\ell^2}^{-|\kappa|}
\end{equation}
for all $(\xi,\eta)\in\mathbb{R}^d\times\mathbb{R}^d$, $(\xi,\eta)\neq(0,0)$.
Suppose also that the tempered distribution $K=\widecheck{m}$ is equal to a bounded compactly supported function (denoted by the same letter).
Then for any real-valued $F,G\in\textup{L}^4(\mathbb{R}^{2d})$ we have the estimate
\[
\Big| \int_{(\mathbb{R}^d)^4} F(x+u,y) G(x,y+u) F(x+v,y) G(x,y+v) K(u,v) du dv dx dy \Big| \leq C \|F\|_{\textup{L}^4}^2 \|G\|_{\textup{L}^4}^2,
\]
with a constant $C\in[0,\infty)$ depending only on the dimension $d$ and the constants $C_\kappa$.
\end{theorem}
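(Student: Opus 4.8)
The plan is to adapt to $\mathbb{R}^d$ the method for entangled singular integral forms of \cite{DKT16} and \cite{DKST16}; indeed, Theorem~\ref{thm:analytictheorem} is precisely the higher-dimensional estimate promised in the abstract. Write $\Lambda_K(F,G)$ for the left-hand side. Two preliminary observations shape the argument. First, H\"older's inequality in $(x,y)$ with four exponents equal to $4$, followed by integration in $(u,v)$, yields the crude bound $|\Lambda_K(F,G)|\le\|K\|_{\textup{L}^1}\|F\|_{\textup{L}^4}^2\|G\|_{\textup{L}^4}^2$, which is finite because $K$ is bounded with compact support; the real content of the theorem is therefore to replace $\|K\|_{\textup{L}^1}$ -- which is not controlled by the $C_\kappa$ (imagine a smoothly truncated Calder\'on--Zygmund kernel) -- by a constant depending only on $d$ and finitely many of the $C_\kappa$. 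Second, setting $\Psi_w:=F(\,\cdot+w,\cdot\,)\,G(\,\cdot\,,\cdot+w)$, which lies in $\textup{L}^2(\mathbb{R}^{2d})$ with $\|\Psi_w\|_{\textup{L}^2}\le\|F\|_{\textup{L}^4}\|G\|_{\textup{L}^4}$ by H\"older, one can rewrite $\Lambda_K(F,G)=\iint_{(\mathbb{R}^d)^2}K(u,v)\,\langle\Psi_u,\Psi_v\rangle_{\textup{L}^2(\mathbb{R}^{2d})}\,du\,dv$, so one is pairing the multiplier $m$ with the bounded positive semidefinite kernel $(u,v)\mapsto\langle\Psi_u,\Psi_v\rangle$, whose diagonal values are $\le\|F\|_{\textup{L}^4}^2\|G\|_{\textup{L}^4}^2$; the point that makes this nontrivial is that $\|\Psi_w\|_{\textup{L}^2}$ does not decay as $|w|\to\infty$, so plain $\textup{L}^2$-boundedness of the multiplier operator is useless.

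By a density argument I would reduce to $F,G\in\textup{C}_c^\infty(\mathbb{R}^{2d})$, and by rescaling $K$ (which leaves $\Lambda_K$ invariant up to relabelling the variables and preserves \eqref{eq:symbolest} with the same $C_\kappa$) to $\operatorname{supp}K\subseteq\{|(u,v)|\le1\}$. Then I would Littlewood--Paley decompose the multiplier: with a radial $\widehat\theta\in\textup{C}_c^\infty(\mathbb{R}^{2d})$ equal to $1$ near the origin and supported in the unit ball, put $\widehat{\theta_j}(\zeta):=\widehat\theta(2^{-j}\zeta)$ and $m_j:=m\,(\widehat{\theta_j}-\widehat{\theta_{j-1}})$, so that $m=\sum_{j\in\mathbb{Z}}m_j$. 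The symbol estimates \eqref{eq:symbolest} make each rescaled piece $m_j(2^j\,\cdot\,)$ a Schwartz bump, supported away from the origin, whose seminorms are bounded in terms of finitely many of the $C_\kappa$ uniformly in $j$; hence $K=\sum_j K_j$ with $K_j(u,v)=2^{2dj}\widetilde K_j(2^ju,2^jv)$, where each $\widetilde K_j$ is Schwartz with uniformly bounded seminorms and all of whose moments (hence both partial integrals $\int\widetilde K_j(u,\cdot)$ and $\int\widetilde K_j(\cdot,v)$) vanish, and the compact support of $K$ legitimizes $\sum_j\Lambda_{K_j}(F,G)=\Lambda_K(F,G)$. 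Everything reduces to bounding $\sum_{j\in\mathbb{Z}}\Lambda_{K_j}(F,G)$ uniformly.

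For each $j$ I would expand the bump $\widetilde K_j$ on $\mathbb{R}^{2d}$ into a rapidly convergent sum of tensor products of bumps on $\mathbb{R}^d$; by the vanishing of the partial integrals both factors can be taken to have vanishing integral, $\widetilde K_j=\sum_l c^{(j)}_l\,a^{(j)}_l\otimes b^{(j)}_l$ with $\int a^{(j)}_l=\int b^{(j)}_l=0$ and $\sum_l\sup_j|c^{(j)}_l|<\infty$. Writing $H^\alpha(x,y):=\int_{\mathbb{R}^d}F(x+w,y)\,G(x,y+w)\,\alpha(w)\,dw$, one has $\Lambda_{\alpha\otimes\beta}(F,G)=\langle H^\alpha,H^\beta\rangle_{\textup{L}^2(\mathbb{R}^{2d})}$, so $\Lambda_{K_j}(F,G)=\sum_l c^{(j)}_l\,\langle H^{a^{(j)}_l},H^{b^{(j)}_l}\rangle$ with the factors now at scale $2^{-j}$. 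The crux is the square function estimate
\[
\sum_{j\in\mathbb{Z}}\big\|H^{\alpha_{2^{-j}}}\big\|_{\textup{L}^2(\mathbb{R}^{2d})}^2\lesssim\|F\|_{\textup{L}^4}^2\|G\|_{\textup{L}^4}^2,\qquad \alpha_t(w):=t^{-d}\alpha(w/t),
\]
valid for any fixed mean-zero Schwartz bump $\alpha$; this is the higher-dimensional form of the estimate of \cite{DKST16}, and it is proved by the monotone-functional mechanism of \cite{DKT16}: one introduces the nonnegative $\Theta(t):=\|H^{\varphi_t}\|_{\textup{L}^2}^2=\Lambda_{\varphi_t\otimes\varphi_t}(F,G)$ for a fixed Gaussian $\varphi$, notes $\sup_{t>0}\Theta(t)\le\|F\|_{\textup{L}^4}^2\|G\|_{\textup{L}^4}^2$ by two applications of Cauchy--Schwarz, shows that $\Theta$ is monotone (the structural heart of the argument), and controls the left-hand side above by the total increment of $\Theta$, which is $\le\sup_t\Theta(t)$. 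Granting the square function estimate, $\sum_j|\Lambda_{K_j}(F,G)|\le\tfrac12\sum_{j,l}|c^{(j)}_l|\big(\|H^{a^{(j)}_l}\|_{\textup{L}^2}^2+\|H^{b^{(j)}_l}\|_{\textup{L}^2}^2\big)$, and summing over $j$ for each fixed $l$ by the square function estimate and then over $l$ using $\sum_l\sup_j|c^{(j)}_l|<\infty$ gives $\sum_j|\Lambda_{K_j}(F,G)|\lesssim\|F\|_{\textup{L}^4}^2\|G\|_{\textup{L}^4}^2$, which finishes the proof.

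The main obstacle is the square function estimate in dimension $d>1$: this is exactly where the argument diverges from \cite{CMP15} and needs the higher-dimensional version of the \cite{DKST16} estimate. In $d=1$ the relevant smoothings act on $\mathbb{R}$, and the monotonicity of $\Theta$ together with the identification of $\|H^{\alpha_{2^{-j}}}\|_{\textup{L}^2}^2$ in terms of increments of $\Theta$ is the content of \cite{DKST16}; for $d>1$ one must run the same Bellman-type analysis with $\Theta$ built from $2d$-dimensional Gaussians and with the increments captured by all $2d$ frequency variables simultaneously, verifying that the constant produced is genuinely independent of the scale and of $\operatorname{supp}F$, $\operatorname{supp}G$ and depends only on $d$ and a fixed finite list of the $C_\kappa$. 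It is this accounting that determines both the admissible range of dimensions and the precise normalisation of the multiplier estimates used elsewhere in the paper; by contrast, the tensor-product bookkeeping of the previous paragraph (including routing any leftover ``mean'' pieces of the decomposition harmlessly aside) is routine once the square function estimate is in hand.
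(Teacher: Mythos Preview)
Your outline has the right shape---Littlewood--Paley decompose, reduce to tensor pieces, then control those by a square-function/telescoping mechanism---but there is a genuine gap at the tensorization step. You claim that because all moments of $\widetilde K_j$ vanish, both partial integrals $\int\widetilde K_j(u,\cdot)\,du$ and $\int\widetilde K_j(\cdot,v)\,dv$ vanish, so that $\widetilde K_j$ expands as $\sum_l c_l\,a_l\otimes b_l$ with $\int a_l=\int b_l=0$. This implication is false: annular Fourier support forces $\widehat{\widetilde K_j}$ to vanish to infinite order \emph{at the origin}, but says nothing about $\widehat{\widetilde K_j}(0,\eta)$ for $|\eta|\sim1$, which is essentially $m(0,2^j\eta)$ times a cutoff and has no reason to vanish. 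Without vanishing on the axes $\xi=0$ and $\eta=0$, any tensor expansion must contain factors with nonzero mean, and then your Cauchy--Schwarz step produces $\sum_j\|H^{b_{2^{-j}}}\|_{\textup{L}^2}^2$ with $\int b\ne0$, which diverges. Separately, the claim that $\Theta(t)=\|H^{\varphi_t}\|_{\textup{L}^2}^2$ is monotone is unjustified: although the $(u,v)$-symbol of $-t\partial_t(\varphi_t\otimes\varphi_t)$ is nonnegative, the form $\Lambda$ is not a positive pairing with that symbol, so $-t\Theta'(t)\ge0$ does not follow.

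The paper deals with exactly this obstruction by first substituting $x'=x+y+u$, $y'=x+y+v$, so the kernel becomes $\widecheck m(x'-x-y,\,y'-x-y)$; in these coordinates the critical hyperplane is $\xi+\eta=0$, not the axes. One then inserts $\|(\xi,\eta)\|^2=\|\xi+\eta\|^2-2\xi\cdot\eta$ into a continuous Littlewood--Paley identity and splits $m=m^{[1]}+m^{[2]}$. The piece $m^{[1]}$ carries the factor $\|\xi+\eta\|^2$ and therefore vanishes on $\xi+\eta=0$; on the spatial side this places mean-zero Gaussians $h^i_t$ on \emph{both} of the variables $x$ and $y$ (not on $u$ and $v$), so Cauchy--Schwarz genuinely decouples $F$ from $G$, and the single-function forms that remain are bounded by the telescoping identity of Lemma~\ref{lemma:mainlemma}---this identity, rather than monotonicity of a single scalar function, is the correct ``structural heart.'' The piece $m^{[2]}$ needs an additional symmetrization and a second application of the same telescoping trick before it too reduces to Lemma~\ref{lemma:mainlemma}. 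In short, the mean-zero-in-both-factors structure you need does not come for free from the annular support of $m_j$; it has to be engineered via the change of variables and the $\|\xi+\eta\|^2$ versus $\xi\cdot\eta$ splitting.
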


We will only need a particular case of the theorem when $F=G$, but the given formulation is more natural since the proof will perform different changes of variables in $F$ and $G$.

The singular integral form in Theorem~\ref{thm:analytictheorem} will appear by expanding out a certain square function quantity; see the proof of Proposition~\ref{prop:ebounded} below. It is more singular than the form used in \cite{CMP15} for the same purpose, so we cannot invoke any standard references on modulation-invariant operators. In fact, boundedness of a related singular integral operator, defined as
\begin{equation}\label{eq:introtht}
T(F,G)(x,y) := \textup{p.v.}\int_{\mathbb{R}} F(x+u,y) G(x,y+u) \frac{du}{u}, \quad (x,y)\in\mathbb{R}^2,
\end{equation}
and called the \emph{triangular Hilbert transform}, is currently an open problem; see \cite{KTZ15} for the partial results.

Only recently the techniques required for bounding the form in Theorem~\ref{thm:analytictheorem} were developed as byproducts of the papers \cite{DKST16} and \cite{DKT16}, both of which are primarily concerned with unrelated problems.
Indeed, Theorem~\ref{thm:analytictheorem} can be viewed as a higher-dimensional variant of an auxiliary estimate from \cite{DKST16}, which established a norm-variation bound
\begin{equation}\label{eq:introvar}
\sup_{0<t_0<t_1<\cdots<t_m} \sum_{j=1}^{m} \|A_{t_j}(F,G)-A_{t_{j-1}}(F,G)\|_{\textup{L}^2}^2 \leq C \|F\|_{\textup{L}^4}^2 \|G\|_{\textup{L}^4}^2
\end{equation}
for two-dimensional bilinear averages
\[
A_t(F,G)(x,y) := \frac{1}{t} \int_{0}^{t} F(x+u,y) G(x,y+u) du, \quad (x,y)\in\mathbb{R}^2.
\]
Inequality \eqref{eq:introvar} in turn proved a quantitative result on the convergence of ergodic averages with respect to two commuting transformations.
Moreover, the paper \cite{DKT16} studied multilinear analogs of \eqref{eq:introtht}, with a more modest goal of proving boundedness with constants growing like $(\log(R/r))^{1-\epsilon}$ as $R/r\to\infty$, where the integration variable $u$ is now restricted to intervals $[-R,-r]$ and $[r,R]$ for $0<r<R$.
Interestingly, early instances of the method used for solving these problems were devised for bounding significantly less singular variants of the operator \eqref{eq:introtht}, such as
\[
T(F,G)(x,y) := \textup{p.v.}\int_{\mathbb{R}^2} F(x+u,y) G(x,y+v) K(u,v) du dv, \quad (x,y)\in\mathbb{R}^2;
\]
see \cite{D15} and \cite{K12}. Roughly speaking, the mentioned technique can be described as follows. Instead of decomposing the given operator and bounding its pieces, one rather performs a structural induction and gradually symmetrizes it by repeated applications of the Cauchy-Schwarz inequality and an integration by parts identity. Eventually, the operator in question becomes so symmetric that a monotonicity argument applies, bounding it simply by single-scale objects.

Finally, let us say a few words about the organization of this paper. In Section~\ref{sec:hapart} we give a detailed self-contained proof of Theorem~\ref{thm:analytictheorem}. Unlike in \cite{DKST16}, where the proof of a special case was given, we do not need any finer control of the constant $C$ here, and are able to make use of further ideas from \cite{DKT16}. Section~\ref{sec:combpart} contains the predominantly combinatorial part of the proof: we derive Theorem~\ref{thm:cornerstheorem} from Theorem~\ref{thm:analytictheorem} by mimicking the steps from \cite{CMP15}. Consequently, we frequently refer to \cite{CMP15} and only comment on the ingredients that have to be altered. Finally, in Section~\ref{sec:genremarks} we discuss the current obstructions to extending Theorem~\ref{thm:cmptheorem} to longer progressions and Theorem~\ref{thm:cornerstheorem} to generalized corners.

%%%%%%%%%%%%%%%%%%%%%

\section{The analytical part: Proof of Theorem~\ref{thm:analytictheorem}}
\label{sec:hapart}
If $A$ and $B$ are two nonnegative quantities, then $A\lesssim_P B$ will denote the inequality $A\leq C B$, with some finite constant $C$ depending on a set of parameters $P$. We will write $A\sim_P B$ if both $A\lesssim_P B$ and $B\lesssim_P A$ hold. The standard inner product on $\mathbb{R}^d$ will be written $(x,y)\mapsto x\cdot y$, while the Euclidean norm $\|\cdot\|_{\ell^2}$ will simply be denoted by $\|\cdot\|$ in this section. Moreover, let $\mathcal{S}(\mathbb{R}^d)$ be the Schwartz space on $\mathbb{R}^d$ and let $\mathbbm{i}$ denote the imaginary unit.
We normalize the \emph{Fourier transform} of a $d$-dimensional Schwartz function $f$ as in
\begin{equation}\label{eq:ft}
\widehat{f}(\xi) := \int_{\mathbb{R}^d} f(x) e^{-2\pi \mathbbm{i} x\cdot\xi} dx,
\end{equation}
so that the \emph{inverse Fourier transform} is given by the formula
\begin{equation}\label{eq:ift}
\widecheck{f}(x) = \int_{\mathbb{R}^d} f(\xi) e^{2\pi \mathbbm{i} x\cdot\xi} d\xi.
\end{equation}

Throughout this section we will use the following notation for the standard Gaussian function on $\mathbb{R}^d$ and its partial derivatives:
\begin{align*}
g(x) & := e^{-\pi \|x\|^2}, \\
h^i (x) & := \partial_i g (x)\quad \text{for } i=1,\ldots,d.
\end{align*}
Moreover, for a function $f\colon\mathbb{R}^d\to\mathbb{C}$ we will denote by $f_t$ its $\textup{L}^1$-normalized dilate by $t>0$, defined as
\begin{equation}\label{eq:normalizationdef}
f_t(x):=t^{-d}f(t^{-1}x).
\end{equation}
An important property of the Fourier transform is $\widehat{f_t}(\xi)=\widehat{f}(t\xi)$.

We begin by stating an ``integration by parts'' lemma, which will be used several times in the proof of Theorem~\ref{thm:analytictheorem}.
Its one-dimensional variant can be found in \cite{D15} or \cite{DKST16}, but we prefer to give a self-contained proof.
For real-valued functions $\psi,\varphi\in\mathcal{S}(\mathbb{R}^d)$ and $F\in\mathcal{S}(\mathbb{R}^{2d})$ we define the singular integral form
\begin{align}
\Theta_{\psi,\varphi}(F) := & \int_0^\infty \int_{(\mathbb{R}^d)^6} F(x,x') F(x,y') F(y,x') F(y,y') \nonumber \\
& \psi_t(x-q)\psi_t(y-q) \varphi_t(x'-p)\varphi_t(y'-p) dx dy dx' dy' dp dq \frac{dt}{t}. \label{eq:defoftheta}
\end{align}
Note that
$\Theta_{\psi,\varphi}(F)$ can be rewritten as
\begin{align*}
& \int_0^\infty \int_{(\mathbb{R}^d)^4} \Big( \int_{\mathbb{R}^d} F(x,x') F(x,y') \psi_t(x-q) dx \Big)^2
\varphi_t(x'-p) \varphi_t(y'-p) dx' dy' dp dq \frac{dt}{t} \\
& = \int_0^\infty \int_{(\mathbb{R}^d)^4} \Big( \int_{\mathbb{R}^d} F(x,x') F(y,x') \varphi_t(x'-p) dx' \Big)^2
\psi_t(x-q) \psi_t(y-q) dx dy dp dq \frac{dt}{t},
\end{align*}
so that $\Theta_{\psi,\varphi}(F)\geq 0$ when $\varphi\geq 0$ or $\psi\geq 0$.

\begin{lemma}\label{lemma:mainlemma}
For any real-valued function $F\in\mathcal{S}(\mathbb{R}^{2d})$ and any $\alpha,\beta >0$ we have the estimate
\[
\sum_{i=1}^d\Theta_{h_\alpha^i,g_\beta}(F) \lesssim \|F\|^4_{\textup{L}^4} ,
\]
where the implicit constant is an absolute one, i.e.\@ independent of $\alpha$, $\beta$, $d$, and $F$.
\end{lemma}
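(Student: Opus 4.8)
The quantity $\Theta_{h^i_\alpha,g_\beta}(F)$ is, after the rewriting given right before the lemma statement, a nonnegative square-function type object. The plan is to expand it using Fourier analysis on the $p$ and $q$ integrations, exploit the explicit structure of the Gaussian $g$ and its derivatives $h^i=\partial_i g$, and reduce everything to a single-scale estimate. First I would carry out the $q$-integral: $\int_{\mathbb{R}^d}\psi_t(x-q)\psi_t(y-q)\,dq=(\psi*\widetilde{\psi})_t(x-y)$ (with $\widetilde{\psi}(z)=\psi(-z)$), and similarly the $p$-integral produces $(\varphi*\widetilde{\varphi})_t(x'-y')$. With $\psi=h^i_\alpha$ and $\varphi=g_\beta$ one gets convolutions of Gaussians and their derivatives, which are again (derivatives of) Gaussians up to harmless constants and rescalings; summing over $i=1,\dots,d$ turns $\sum_i h^i*\widetilde{h^i}$ into (a constant times) $-\Delta$ applied to a Gaussian, i.e.\ into $c\,(\pi\|z\|^2 \text{-polynomial})\cdot\text{Gaussian}$, whose crucial feature is that its total mass $\int_{\mathbb{R}^d}$ is controlled \emph{independently of $d$} — this is where the dimension-free constant comes from, and it is the analogue of the one-dimensional computation in \cite{DKST16}.

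Next I would insert the resulting kernels and use that $|F(x,x')F(x,y')F(y,x')F(y,y')|\le \tfrac14\big(F(x,x')^2F(x,y')^2+\cdots\big)$ or, more efficiently, treat the inner square as in the displayed rewriting: write
\[
\Theta_{h^i_\alpha,g_\beta}(F)=\int_0^\infty\!\int_{(\mathbb{R}^d)^2}\Big(\int_{\mathbb{R}^d} F(x,x')F(y,x')(g_\beta)_t(x'-p)\,dx'\Big)^2 (h^i_\alpha*\widetilde{h^i_\alpha})_t(x-y)\,dx\,dy\,dp\,\frac{dt}{t},
\]
and then, after summing in $i$, bound $\big|(\sum_i h^i_\alpha*\widetilde{h^i_\alpha})_t(x-y)\big|$ pointwise by a nonnegative Gaussian-type bump of unit mass (in the $x{-}y$ variable) times an $L^1_t(\frac{dt}{t})$-summable factor coming from the scale interplay between $\alpha$ and $t$. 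The remaining $t$-integral against $\frac{dt}{t}$ is the genuinely singular ingredient: here one should recognize a square function in $t$ associated to the Littlewood–Paley-type family $x'\mapsto \int F(x,x')F(y,x')(g_\beta)_t(x'-p)\,dx'$ and control it by an $L^2$ (or $L^4$) bound. Concretely, I expect to bound the inner $dx'\,dp$ integral by Cauchy–Schwarz and the square-function estimate for the Gaussian semigroup (a standard Littlewood–Paley / $g$-function bound), landing on $\|F(x,\cdot)\|_{\textup{L}^2_{x'}}\|F(y,\cdot)\|_{\textup{L}^2_{x'}}$-type quantities, and then the $x,y$ integration against the unit-mass bump followed by another application of Cauchy–Schwarz gives $\int_{(\mathbb{R}^d)^2}F(x,x')^2F(x,y')^2\,dx'\,dy'\,dx$-type terms, i.e.\ $\|F\|_{\textup{L}^4}^4$.

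The main obstacle is ensuring that \emph{every} constant produced along the way — the Gaussian convolution constants, the mass of $\sum_i h^i*\widetilde{h^i}$, and especially the square-function / Littlewood–Paley constant — is absolute and does not secretly depend on $d$. The naive Littlewood–Paley bound for the $d$-dimensional Gaussian semigroup has a dimension-free constant (it comes from the one-dimensional heat-semigroup bound via a spectral or direct computation), so the right way to organize the argument is to keep the $x'$-variable square function genuinely one-parameter (a single dilation parameter $t$, not a $d$-fold product), which the definition of $\Theta$ already arranges. I would double-check the normalization so that $\sum_{i=1}^d h^i_\alpha*\widetilde{h^i_\alpha}$ integrates to a $d$-independent constant — a quick computation via the Fourier transform, $\widehat{h^i}(\xi)=2\pi\mathbbm{i}\,\xi_i\,e^{-\pi\|\xi\|^2}$, gives $\sum_i |\widehat{h^i_\alpha}(\xi)|^2 = 4\pi^2\alpha^2\|\xi\|^2 e^{-2\pi\alpha^2\|\xi\|^2}$, whose value at $\xi=0$ is $0$ and whose $L^\infty$ norm in $\xi$ is an absolute constant — this is precisely the algebraic miracle that makes the estimate dimension-free, and verifying it carefully is the crux. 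Once that is in hand, the remaining steps are the routine Cauchy–Schwarz manipulations and the standard Gaussian square-function bound, none of which introduce $d$-dependence.
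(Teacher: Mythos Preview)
Your plan has a genuine gap in the handling of the $\frac{dt}{t}$ integral. The claimed ``$L^1_t(\frac{dt}{t})$-summable factor coming from the scale interplay between $\alpha$ and $t$'' does not exist: since $(h^i_\alpha)_t = h^i_{\alpha t}$, the kernel $\big(\sum_i h^i_\alpha\!\ast\!\widetilde{h^i_\alpha}\big)_t$ is simply the dilate by $\alpha t$ of a fixed Schwartz function $K$, so a bound of the form $|K_{\alpha t}(z)|\le \phi(t)\,G_t(z)$ with $\int G_t=1$ forces $\phi(t)\ge \sup_w |K(w)|/G(w)$, a positive constant, which is not in $L^1(\frac{dt}{t})$. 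Your fallback --- treating the $g_\beta$ part as a Littlewood--Paley square function --- also fails: $g$ has nonzero integral, so $\int_0^\infty |f\ast (g_\beta)_t(p)|^2\,\frac{dt}{t}=\infty$ wherever $f$ has nonvanishing local average. In short, the $\frac{dt}{t}$-convergence must come from the mean-zero factor $h^i$, but your pointwise bound on that kernel discards exactly the cancellation that makes the integral finite.

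The paper bypasses all of this with a one-line algebraic identity. On the Fourier side one has $\sum_i|\widehat{h^i_{\alpha t}}(\xi)|^2|\widehat{g_{\beta t}}(\eta)|^2 + \sum_i|\widehat{g_{\alpha t}}(\xi)|^2|\widehat{h^i_{\beta t}}(\eta)|^2 = -t\partial_t\big(e^{-2\pi\|\alpha t\xi\|^2}e^{-2\pi\|\beta t\eta\|^2}\big)\cdot\pi$, whose $\frac{dt}{t}$-integral equals $\pi$ for every $(\xi,\eta)\neq(0,0)$ by the fundamental theorem of calculus. Undoing the Fourier transform (via the inversion formula you already identified) gives the exact identity
\[
\sum_{i=1}^d\big(\Theta_{h^i_\alpha,g_\beta}(F)+\Theta_{g_\alpha,h^i_\beta}(F)\big)=\pi\,\|F\|_{\textup{L}^4}^4.
\]
Since both sums on the left are nonnegative (by the square rewriting you quoted), each is at most $\pi\,\|F\|_{\textup{L}^4}^4$, with an absolute constant. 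The Fourier computation you performed, $\sum_i|\widehat{h^i_\alpha}(\xi)|^2 = 4\pi^2\alpha^2\|\xi\|^2e^{-2\pi\alpha^2\|\xi\|^2}$, is indeed the heart of the matter --- but it should be combined with $|\widehat{g_{\beta t}}(\eta)|^2$ and integrated in $t$ \emph{before} passing back to the spatial side, not after.
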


\begin{proof}[Proof of Lemma~\ref{lemma:mainlemma}]
We claim that
\begin{equation}\label{eq:telescoping}
\sum_{i=1}^d \big( \Theta_{h^i_\alpha, g_\beta}(F) + \Theta_{g_\alpha,h^i_\beta}(F) \big) = \pi \|F\|_{\textup{L}^4}^4.
\end{equation}
By the remark preceding the lemma, all terms on the left-hand side of \eqref{eq:telescoping} are nonnegative.
Therefore, \eqref{eq:telescoping} implies the inequalities
\[
\sum_{i=1}^d \Theta_{h^i_\alpha, g_\beta}(F) \lesssim \|F\|^4_{\textup{L}^4}, \quad\sum_{i=1}^d \Theta_{g_\alpha,h^i_\beta}(F) \lesssim \|F\|^4_{\textup{L}^4}.
\]
This establishes the claim of Lemma~\ref{lemma:mainlemma}, up to the verification of \eqref{eq:telescoping}.

To show the identity \eqref{eq:telescoping} we observe that by the fundamental theorem of calculus
\begin{align*}
& \sum_{i=1}^d \Big( \int_0^\infty (2\pi \alpha t \xi_i)^2 e^{-2\pi \|\alpha t \xi\|^2} e^{-2\pi \|\beta t \eta\|^2}\frac{dt}{t} +
\int_0^\infty e^{-2\pi \|\alpha t \xi\|^2} (2\pi \beta t \eta_i)^2 e^{-2\pi \|\beta t \eta\|^2} \frac{dt}{t} \Big) \\
& = \pi\int_0^\infty \Big(-t\partial_t \big(e^{-2\pi \|\alpha t \xi\|^2}e^{-2\pi \|\beta t \eta\|^2} \big) \Big) \frac{dt}{t} = \pi
\end{align*}
for any $\xi=(\xi_1,\ldots,\xi_d)\in\mathbb{R}^d$ and $\eta=(\eta_1,\ldots,\eta_d)\in\mathbb{R}^d$ such that $(\xi,\eta)\neq(0,0)$.
Using $\widehat{g}(\xi)=e^{-\pi\|\xi\|^2}$ and $\widehat{h^i}(\xi) = 2\pi\mathbbm{i} \xi_i \widehat{g}(\xi)$ this can be rewritten as
\begin{equation}\label{eq:pifourier}
 \sum_{i=1}^d \Big( \int_0^\infty |\widehat{h_{\alpha t}^i}(\xi)|^2 |\widehat{g_{\beta t}}(\eta)|^2 \frac{dt}{t}
+ \int_0^\infty |\widehat{g_{\alpha t}}(\xi)|^2 |\widehat{h_{\beta t}^i}(\eta)|^2 \frac{dt}{t} \Big) = \pi.
\end{equation}
Note that for real-valued Schwartz functions $\varphi$ and $\psi$ one has
\begin{align}\nonumber
& \int_{(\mathbb{R}^{d})^2} |\widehat{\psi_t}(\xi)|^2 |\widehat{\varphi_t}(\eta)|^2 e^{2\pi \mathbbm{i} ((x-y)\cdot\xi + (x'-y')\cdot\eta)} d\xi d\eta \\ \label{eq:inversionformula}
& = \int_{(\mathbb{R}^{d})^2} \psi_t(x-q)\psi_t(y-q)\varphi_t(x'-p)\varphi_t(y'-p) dp dq .
\end{align}
Indeed, for a function $\rho$ we denote $\widetilde{\rho}(s):=\overline{\rho(-s)}$, so that the Fourier transform of $\widetilde{\rho}$ is the complex conjugate of $\widehat{\rho}$. Equality \eqref{eq:inversionformula} follows by noticing that its right-hand side equals
\[
(\psi_t\ast\widetilde{\psi}_t) (x-y) (\varphi_t\ast\widetilde{\varphi}_t) (x'-y'),
\]
which in turn transforms into the left-hand side using the Fourier inversion formula and
\[
\widehat{\psi_t\ast\widetilde{\psi}_t} = |\widehat{\psi_t}|^2,\quad \widehat{\varphi_t\ast\widetilde{\varphi}_t} = |\widehat{\varphi_t}|^2.
\]

Now we multiply \eqref{eq:pifourier} by
\[
F(x,x') F(x,y') F(y,x') F(y,y') e^{2\pi \mathbbm{i} ((x-y)\cdot\xi + (x'-y')\cdot\eta)}
\]
and
integrate in $x,y,x',y'$ and $\xi,\eta$. Then we apply the inversion formula \eqref{eq:inversionformula}
twice, once with $(\psi,\varphi) = (h^i_\alpha,g_\beta)$ and once with $(\psi,\varphi) = (g_\alpha, h_\beta^i)$, and recall the definition \eqref{eq:defoftheta}. This gives
\begin{align*}
&\sum_{i=1}^d \big( \Theta_{h^i_\alpha, g_\beta}(F) + \Theta_{g_\alpha,h^i_\beta}(F) \big)\\
& = \pi \int_{(\mathbb{R}^d)^4} F(x,x') F(x,y') F(y,x') F(y,y') \delta_{(0,0)}(x-y,x'-y') dx dy dx' dy' = \pi \|F\|_{\textup{L}^4}^4.
\end{align*}
Here $\delta_{(0,0)}$ denotes the Dirac measure concentrated at the origin and it is a well-known fact that its Fourier transform is the function constantly equal to $1$ on the whole space $\mathbb{R}^d\times\mathbb{R}^d$.
\end{proof}

Observe that for $\nu>0$ and $x\in\mathbb{R}^d$ we have
\begin{equation}\label{eq:schwartzgauss}
(1+\|x\|)^{-\nu} \sim_\nu \int_{1}^{\infty} e^{-\pi\beta^{-2}\|x\|^2} \frac{d\beta}{\beta^{\nu+1}}.
\end{equation}
This formula is easily verified by continuity and considering the limiting behavior as $\|x\|\rightarrow\infty$, when the ratio of the two sides converges to
\[
\lim_{\|x\|\rightarrow\infty} \int_{1}^{\infty} e^{-\pi (\|x\|/\beta)^2} (\|x\|/\beta)^{\nu} \frac{d\beta}{\beta}
= \int_{0}^{\infty} e^{-\pi\alpha^2} \alpha^{\nu-1} d\alpha = \frac{1}{2} \pi^{-\nu/2} \Gamma\Big(\frac{\nu}{2}\Big) \in (0,\infty).
\]
It will be used in the proof of Theorem \ref{thm:analytictheorem} to gradually reduce to forms in which all bump functions are Gaussians or their derivatives. Gaussians possess several convenient algebraic properties, such as positivity, elementary tensor structure, and the fact that they relate differentiation to multiplication.

\begin{proof}[Proof of Theorem~\ref{thm:analytictheorem}]
By a density argument we can assume that $F$ and $G$ are real-valued Schwartz functions.
Substituting
\[ x' = x + y + u,\quad y' = x + y + v \]
and introducing the functions
\[
\widetilde{F}(a,b) := F(b-a,a),\quad \widetilde{G}(a,b) := G(a,b-a)
\]
the form in question can be written as
\begin{equation}\label{eq:rewrittenform}
\int_{(\mathbb{R}^d)^4} \widetilde{F}(y,x') \widetilde{G}(x,x') \widetilde{F}(y,y') \widetilde{G}(x,y') \widecheck{m}(x'-x-y,y'-x-y) dx dy dx' dy'.
\end{equation}
We need to bound its absolute value by a constant times
\[
\|F\|_{\textup{L}^4}^2 \|G\|_{\textup{L}^4}^2 = \|\widetilde{F}\|_{\textup{L}^4}^2 \|\widetilde{G}\|_{\textup{L}^4}^2.
\]
Let us henceforth omit writing tildes on the functions in \eqref{eq:rewrittenform}.
We will say that the form \eqref{eq:rewrittenform} is associated with the symbol $m$.

The first step is to decompose the kernel $\widecheck{m}$ into elementary tensors in the variables $x,y,x',y'$, which will allow for an application of the Cauchy-Schwarz inequality.

Let $\phi\in\mathcal{S}(\mathbb{R}^{2d})$ be a nonnegative radial function supported in the annulus $\{\tau\in \mathbb{R}^{2d} : 1\leq \|\tau\| \leq 2\}$ and not identically equal to $0$.
The constants in any estimates that follow are allowed to depend on $\phi$ without explicit mention.
Then
\begin{align*}
D := \int_0^\infty \phi(t\xi,t\eta) \|(t\xi,t\eta)\|^{2} e^{-\pi\|(t\xi,t\eta)\|^2} \frac{dt}{t}
\end{align*}
is the same constant for each $(\xi,\eta)\neq(0,0)$. Therefore, for each such pair $(\xi,\eta)$ we can write
\[
m(\xi,\eta) = D^{-1} \int_0^\infty m(\xi,\eta)\phi(t\xi,t\eta) \|(t\xi,t\eta)\|^{2} e^{-\pi\|(t\xi,t\eta)\|^2} \frac{dt}{t}.
\]
Using the identity
\begin{equation}\label{eq:elemid}
\| (\xi,\eta) \|^2 = \|\xi+\eta\|^2 - 2\xi\cdot\eta
\end{equation}
we can split further
\[
m = m^{[1]} + m^{[2]},
\]
where
\begin{align*}
m^{[1]}(\xi,\eta) & := D^{-1}\int_0^\infty m^{(t)}(t\xi,t\eta) \|t\xi+t\eta\|^2 e^{-\pi \|(t\xi,t\eta)\|^2} \frac{dt}{t}, \\
m^{[2]}(\xi,\eta) & := -2D^{-1}\int_0^\infty m^{(t)}(t\xi,t\eta) (t\xi\cdot t\eta) e^{-\pi \|(t\xi,t\eta)\|^2} \frac{dt}{t},
\end{align*}
and we have set
\[
m^{(t)}(\xi,\eta) := m(t^{-1}\xi,t^{-1}\eta)\phi(\xi,\eta).
\]
Now we separately study the forms associated with $m^{[1]}$ and $m^{[2]}$.

First we consider $m^{[1]}$. This is the easier term, as it vanishes on the plane $\xi+\eta=0$, which brings useful cancellation to our form. The remaining part of the proof related to $m^{[1]}$ can be compared with Sections 3 and 4 in \cite{DKST16}.

 Define the functions $\varphi^{(t)}$ and $\vartheta^{(i,t)}$ via their Fourier transforms as
\begin{align*}
\widehat{\varphi^{(t)}}(\xi,\eta) &:= m^{(t)}(\xi,\eta)e^{2\pi\xi\cdot\eta}, \\
\widehat{\vartheta^{(i,t)}}(\xi,\eta) &:= \widehat{\varphi^{(t)}}(\xi,\eta) \big( (\xi_i+\eta_i) e^{-2^{-1}\pi \|\xi+\eta\|^2} \big)^2.
\end{align*}
Observe that by $\|\xi+\eta\|^2 = \sum_{i=1}^d (\xi_i+\eta_i)^2$ and \eqref{eq:elemid} used in the exponent we have
\begin{equation}\label{eq:m1}
m^{[1]}(\xi,\eta) = D^{-1} \sum_{i=1}^d \int_0^\infty \widehat{\vartheta^{(i,t)}}(t\xi,t\eta) \frac{dt}{t}.
\end{equation}
By the Fourier inversion formula we can write
\begin{align}
& \vartheta_t^{(i,t)}(x'-x-y,y'-x-y) \nonumber \\
& = \int_{(\mathbb{R}^d)^2} \widehat{\varphi^{(t)}}(t\xi,t\eta) \big( (t\xi_i+t\eta_i) e^{- 2^{-1}\pi \|t\xi+t\eta\|^2} \big)^2 e^{2\pi\mathbbm{i} ( (x'-x-y)\cdot\xi + (y'-x-y)\cdot\eta )} d\xi d\eta \nonumber \\
& = -\frac{1}{2\pi^2}\int_{(\mathbb{R}^d)^2} \widehat{\varphi^{(t)}_{t}}(\xi,\eta) e^{2\pi\mathbbm{i} (x'\cdot\xi + y'\cdot\eta)}
\big( \widehat{h_{2^{-1/2}t}^i}(-\xi-\eta) \big)^2 e^{2\pi\mathbbm{i} (x\cdot(-\xi-\eta) + y\cdot(-\xi-\eta))} d\xi d\eta. \label{eq:subspace}
\end{align}
To pass from the second to the third line we have used $\widehat{h^i}(\xi) = 2\pi\mathbbm{i} \xi_i e^{-\pi\|\xi\|^2}$.
Using the definition of the Fourier transform, \eqref{eq:subspace} can be, up to a constant, viewed as the integral of the Fourier transform of the $4d$-dimensional function
\[
H(a,b,c,d):= {\varphi_t^{(t)}}(x'+a,y'+b) h_{2^{-1/2}t}^i(x+c) h_{2^{-1/2}t}^i(y+d)
\]
over a $2d$-dimensional subspace of $\mathbb{R}^{4d}$ parametrized by
\begin{equation}\label{eq:subspacepar}
\{(\xi,\eta,-\xi-\eta,-\xi-\eta): \xi, \eta \in \mathbb{R}^d\}.
\end{equation}
The integral of the Fourier transform of $H$ over the above mentioned subspace equals the integral of the function $H$ itself over the orthogonal complement of this subspace.
This fact can be found for instance in \cite{MS13}, and it is easily verified by performing an orthogonal change of variables, which rotates the two subspaces onto $2d$-dimensional coordinate planes in $\mathbb{R}^{2d}\times\mathbb{R}^{2d}$.
%It is exercise 4.9 in Schlag-Muscalu.
The orthogonal complement of \eqref{eq:subspacepar} can be parametrized by
\[
\{(-p-q,-p-q,-p,-q): p,q \in \mathbb{R}^d\}.
\]
Therefore, \eqref{eq:subspace} is a constant multiple of
\[
\int_{(\mathbb{R}^d)^2} {\varphi_t^{(t)}}(x'-p-q,y'-p-q)h^i_{2^{-1/2}t}(x-p)h^i_{2^{-1/2}t}(y-q)dpdq .
\]
Combining this with the decomposition of $m^{[1]}$ given in \eqref{eq:m1}, we see that the form associated with $m^{[1]}$ can be, up to a constant, recognized as
\begin{align}\label{eq:formspatialside}
\sum_{i=1}^d\int_0^\infty \int_{(\mathbb{R}^d)^6} & F(y,x')G(x,x')F(y,y')G(x,y') h_{2^{-1/2}t}^i(x-p) h_{2^{-1/2}t}^i(y-q) \nonumber \\
& \varphi^{(t)}_{t}(x'-p-q,y'-p-q) dx dy dx' dy' dp dq \frac{dt}{t}.
\end{align}
Note that vanishing of the multiplier on $\xi+\eta=0$ is crucial for the cancellation in $x$ and $y$ on the spatial side.

Now we are ready to proceed with an application of the Cauchy-Schwarz inequality.
We separate the functions in \eqref{eq:formspatialside} with respect to the variables $x,y$ and rewrite \eqref{eq:formspatialside} as
\begin{align*}
\sum_{i=1}^d\int_0^\infty \int_{(\mathbb{R}^d)^4} \Big( \int_{\mathbb{R}^d}&F(y,x')F(y,y')h_{2^{-1/2}t}^i(y-q) dy \Big ) \Big ( \int_{\mathbb{R}^d}G(x,x')G(x,y')h_{2^{-1/2}t}^i(x-p) dx \Big ) \\
& \varphi^{(t)}_{t}(x'-p-q,y'-p-q) dx' dy' dp dq \frac{dt}{t}.
\end{align*}
Then we apply the Cauchy-Schwarz inequality in $x',y',p,q,t$ and in $i$, after which it remains to bound
\begin{equation}\label{eq:formaftercs}
\sum_{i=1}^d \int_0^\infty \int_{(\mathbb{R}^d)^4} \Big( \int_{\mathbb{R}^d} F(y,x') F(y,y') h_{2^{-1/2}t}^i(y-q) dy \Big)^2 \big|\varphi^{(t)}_{t}(x'-p,y'-p)\big| dx' dy' dp dq \frac{dt}{t}
\end{equation}
and an analogous term involving the function $G$, which we omit. Note that we changed the variable $p$ to $p-q$ while simplifying \eqref{eq:formaftercs}. The next step is to reduce to Gaussians using the formula \eqref{eq:schwartzgauss}.
We have
\begin{align}
|\varphi^{(t)}(u,v)| & \lesssim_{d,(C_\kappa)} (1+\|(u,v)\|)^{-2d-1} \nonumber \\
& \sim_d \int_1^\infty e^{-\pi\beta^{-2}\|(u,v)\|^2} \frac{d\beta}{\beta^{2d+2}} = \int_1^\infty g_\beta(u)g_\beta(v) \frac{d\beta}{\beta^2}. \label{eq:varphidecay}
\end{align}
The first estimate above can be verified integrating by parts in the Fourier expansion of ${\varphi^{(t)}}$. It holds uniformly in $t>0$, with the implicit constant depending only on $d$ and the constants $C_\kappa$ appearing in \eqref{eq:symbolest}.
The second estimate above is simply \eqref{eq:schwartzgauss} for $x=(u,v)\in\mathbb{R}^{2d}$ and $\nu=2d+1$.
Substituting \eqref{eq:varphidecay} into \eqref{eq:formaftercs} and expanding out the square dominates \eqref{eq:formaftercs} by a constant multiple of
\[
\int_1^\infty \sum_{i=1}^d\Theta_{h^i_{\alpha}, g_{\beta}}(F) \frac{d\beta}{\beta^2},
\]
where $\alpha=2^{-1/2}$ and we recall the definition \eqref{eq:defoftheta}.
By Lemma \ref{lemma:mainlemma}, the last display is bounded by a constant multiple of
\[
\int_1^\infty \|F\|_{\textup{L}^4}^4 \frac{d\beta}{\beta^2} = \|F\|_{\textup{L}^4}^4,
\]
which concludes the proof of
boundedness of the form associated with $m^{[1]}$.

It remains to consider the form associated with the multiplier symbol $m^{[2]}$, which does not vanish on $\xi+\eta=0$. This part of the proof can be compared with Section 5 in \cite{DKST16}. In the one-dimensional case \cite{DKST16}, the multiplier was symmetrized to become constant on the axis $\xi+\eta=0$. Then that constant was subtracted from the multiplier and a lacunary decomposition with respect to the critical axis was performed. In the present higher-dimensional setting we also reduce the problem to parts vanishing on the problematic plane $\xi+\eta=0$. However, working with Gaussians allows us to do that by using several related algebraic identities.

Applying the Fourier inversion formula to $m^{(t)}$ and using $\|(\xi,\eta)\|^2 = \|\xi\|^2 + \|\eta\|^2$ in the exponent we can write
\[
m^{[2]}(\xi,\eta) = -2D^{-1}\int_{(\mathbb{R}^{d})^2} \int_0^\infty \widecheck{m^{(t)}}(u,v) (t\xi\cdot t\eta) e^{-\pi\|t\xi\|^2} e^{-2\pi\mathbbm{i} u\cdot t\xi} e^{-\pi\|t\eta\|^2} e^{-2\pi\mathbbm{i} v\cdot t\eta} \frac{dt}{t} dudv.
\]
Using $\xi\cdot\eta = \sum_{i=1}^{d} \xi_i \eta_i$ and taking the inverse Fourier transform of
\begin{align*}
\xi_i e^{-\pi\|t\xi\|^2-2\pi\mathbbm{i} u\cdot t\xi} \quad \textup{and} \quad \eta_i e^{-\pi\|t\eta\|^2-2\pi\mathbbm{i} v\cdot t\eta},
\end{align*}
we see that the form associated with $m^{[2]}$ can be, up to a constant, recognized as
\begin{align}
\sum_{i=1}^d\int_{(\mathbb{R}^{d})^2} \int_0^\infty & \widecheck{m^{(t)}}(u,v) \int_{(\mathbb{R}^{d})^4} F(y,x')G(x,x')F(y,y')G(x,y') \nonumber \\
& h_{t}^{i}(x'-x-y-tu)h_{t}^{i}(y'-x-y-tv)dxdydx'dy' \frac{dt}{t}dudv. \label{eq:formm2}
\end{align}

Now we would like to reduce the parameters $u$ and $v$ to only one parameter, which gives more symmetry. For this we first write \eqref{eq:formm2} as
\begin{align*}
\sum_{i=1}^d\int_{(\mathbb{R}^{d})^2} \int_0^\infty \widecheck{m^{(t)}}(u,v) \int_{(\mathbb{R}^{d})^2} &\Big( \int_{\mathbb{R}^{d}} F(y,x')G(x,x') h_{t}^{i}(x'-x-y-tu) dx' \Big )\\
&\Big( \int_{\mathbb{R}^{d}} F(y,y')G(x,y')h_{t}^{i}(y'-x-y-tv) dy' \Big) dx dy \frac{dt}{t}dudv.
\end{align*}
Then we use
\[
\big|\widecheck{m^{(t)}}(u,v)\big| \lesssim_{d,(C_\kappa)} (1+\|u\|)^{-d-1} (1+\|v\|)^{-d-1},
\]
which can be deduced analogously to the first estimate in \eqref{eq:varphidecay}, and apply the Cauchy-Schwarz inequality in $x,y$, and $t$. This yields
\begin{align*}
\sum_{i=1}^d \bigg( \int_{\mathbb{R}^d} (1+\|u\|)^{-d-1} \Big( & \int_0^\infty \int_{(\mathbb{R}^{d})^2} \Big( \int_{\mathbb{R}^d} F(y,x')G(x,x') \\
& h_{t}^{i}(x'-x-y-tu)dx'\Big)^2 dxdy\frac{dt}{t} \Big)^{1/2} du \bigg)^2.
\end{align*}
Indeed, note that after application of the Cauchy-Schwarz inequality the integrals in $u$ and $v$ have separated and they are equal. By another application of the Cauchy-Schwarz inequality, this time in $u$, we obtain
\begin{align}
\Big( \int_{\mathbb{R}}(1+\|u\|)^{-d-1} du\Big ) \sum_{i=1}^d \bigg( \int_{\mathbb{R}^d} (1+\|u\|)^{-d-1}
\int_0^\infty \int_{(\mathbb{R}^{d})^2} \Big( \int_{\mathbb{R}^d} F(y,x')G(x,x') & \nonumber \\
h_{t}^{i}(x'-x-y-tu)dx'\Big)^2 dxdy \frac{dt}{t} du \bigg) & . \label{eq:aftercs}
\end{align}
We evaluate the first integral in $u$ and dominate $(1+\|u\|)^{-d-1}$ in the second integral using \eqref{eq:schwartzgauss}, analogously to the domination in \eqref{eq:varphidecay}. Expanding the square in \eqref{eq:aftercs}, it then remains to bound
\begin{align}
\int_1^\infty \sum_{i=1}^d\int_{\mathbb{R}^d} g_\alpha(u) & \int_0^\infty \int_{(\mathbb{R}^{d})^4} F(y,x')G(x,x')F(y,y')G(x,y') \nonumber \\
& h_{t}^{i}(x'-x-y-tu)h^{i}_{t}(y'-x-y-tu)dxdydx'dy'du \frac{dt}{t} \frac{d\alpha}{\alpha^2}. \label{eq:formafterdom}
\end{align}
Note that it suffices to consider the expression in \eqref{eq:formafterdom} for each fixed $\alpha$ and obtain estimates that are uniform in $\alpha\geq 1$. Taking the Fourier transform of
\begin{align*}
\sum_{i=1}^d\int_{\mathbb{R}^d} g_\alpha(u) h_{t}^{i}(a-tu)h^{i}_{t}(b-tu) du
\end{align*}
in variable $(a,b)$ gives a constant multiple of
\[
(t\xi\cdot t\eta) e^{-\pi \|(t\xi,t\eta)\|^2} e^{-\pi\|\alpha t\xi + \alpha t\eta\|^2} .
\]
Therefore, the form \eqref{eq:formafterdom} for a fixed $\alpha$ is, up to a constant, associated with the symbol
\begin{align}\label{eq:multsymmetry}
\int_0^\infty (t\xi\cdot t\eta) e^{-\pi \|(t\xi,t\eta)\|^2} e^{-\pi\|\alpha t\xi + \alpha t\eta\|^2} \frac{dt}{t}.
\end{align}

Now that we have symmetrized the multiplier in $u$ and $v$, we go backwards: we again use the identity \eqref{eq:elemid} and write twice the expression in \eqref{eq:multsymmetry} as
\begin{align}
& \int_0^\infty \|t\xi + t\eta\|^2 e^{-\pi\|(t\xi,t\eta)\|^2} e^{-\pi\|\alpha t\xi + \alpha t\eta\|^2} \frac{dt}{t} \label{eq:mult1} \\
& - \int_0^\infty \|( t\xi, t\eta)\|^2 e^{-\pi\|(t\xi,t\eta)\|^2} e^{-\pi\|\alpha t\xi + \alpha t\eta\|^2} \frac{dt}{t}. \label{eq:mult2}
\end{align}

The term \eqref{eq:mult1} is easier to handle, and can be treated similarly as \eqref{eq:m1}. Indeed, note that \eqref{eq:mult1} can be further rewritten as
\begin{align}\label{eq:multfirstterm}
\alpha^{-2} \sum_{i=1}^d \int_0^\infty e^{-\pi\|(t\xi,t\eta)\|^2} \Big( (\alpha t\xi_i+\alpha t\eta_i) e^{-2^{-1}\pi \|\alpha t\xi+\alpha t\eta\|^2} \Big)^2 \frac{dt}{t}.
\end{align}
Performing the steps analogous to \eqref{eq:m1}--\eqref{eq:formspatialside} and observing $\alpha^{-2}\leq 1$, since we only consider $\alpha\geq 1$, it suffices to bound
\begin{align}\label{eq:multfirstterm2}
\sum_{i=1}^d\int_0^\infty \int_{(\mathbb{R}^d)^6} & F(y,x')G(x,x')F(y,y')G(x,y') h_{\alpha t}^i(x-p) h_{\alpha t}^i(y-q) \nonumber \\
& g_{2^{1/2}t}(x'-p-q)g_{2^{1/2}t}(y'-p-q) dx dy dx' dy' dp dq \frac{dt}{t}
\end{align}
uniformly in the parameter $\alpha$.
Separating the functions with respect to the variables $x,y$ and applying the Cauchy-Schwarz inequality analogously to \eqref{eq:formaftercs}, we estimate the last display by
\[
\Big ( \sum_{i=1}^d\Theta_{h^i_{\alpha},g_{\beta}}(F) \Big )^{1/2}\Big ( \sum_{i=1}^d\Theta_{h^i_{\alpha},g_{\beta}}(G) \Big )^{1/2} \lesssim \|F\|_{\textup{L}^4}^2\|G\|_{\textup{L}^4}^2,
\]
where $\beta=2^{1/2}$ and the last inequality follows from Lemma~\ref{lemma:mainlemma}.

It remains to consider the second term \eqref{eq:mult2}. Here we first use an integration by parts identity to transfer to a multiplier vanishing on the critical plane $\xi+\eta=0$. By the fundamental theorem of calculus we have
\begin{align}
& 2\pi \int_0^\infty \|(t\xi,t\eta)\|^2 e^{-\pi\|(t\xi,t\eta)\|^2} e^{-\pi\|\alpha t\xi + \alpha t\eta\|^2} \frac{dt}{t} \label{eq:tel} \\
& + 2\pi \int_0^\infty e^{-\pi\|(t\xi,t\eta)\|^2} \|\alpha t\xi + \alpha t\eta\|^2 e^{-\pi\|\alpha t\xi + \alpha t\eta\|^2} \frac{dt}{t} \label{eq:tel2} \\
& = \int_0^\infty \Big( -t\partial_t \big( e^{-\pi\|(t\xi,t\eta)\|^2} e^{-\pi\|\alpha t\xi + \alpha t\eta\|^2} \big) \Big) \frac{dt}{t} = 1 \nonumber
\end{align}
for $(\xi,\eta)\neq(0,0)$.
Since \eqref{eq:mult2} is up to a constant equal to the term in \eqref{eq:tel}, and the form associated with the constant symbol $1$ is trivially bounded,
it remains to consider the form associated with \eqref{eq:tel2}.
Note that it is analogous to \eqref{eq:mult1}, up to scaling in $\alpha$.

Expanding $\|\alpha t\xi + \alpha t\eta\|^2$ as in \eqref{eq:multfirstterm} and performing the steps analogous to \eqref{eq:m1}--\eqref{eq:formspatialside} we again arrive at the form \eqref{eq:multfirstterm2}, which is bounded by the preceeding discussion.
This finishes the proof.
\end{proof}

%%%%%%%%%%%%%%%%%%%%%

\section{The combinatorial part: Proof of Theorem~\ref{thm:cornerstheorem}}
\label{sec:combpart}
As mentioned in the introduction, our strategy of proof closely follows that in \cite{CMP15}. In our presentation we try to find a compromise between elaborating the key steps and avoiding repetition.

For a fixed $1<p<\infty$ the authors of \cite{CMP15} start by defining a measure supported on $S_\lambda = \{ s\in \mathbb{R}^d \colon \|s\|_{\ell^p} = \lambda \}$ that detects the correct size (of common differences or sides) in the $\ell^p$-norm. More precisely, for each $\lambda>0$ we define $\sigma_{\lambda}$ formally via the oscillatory integral
\[
\sigma_{\lambda}(s) := \lambda^{-d+p} \int_\mathbb{R} e^{2\pi \mathbbm{i} t(\|s\|_{\ell^p}^{p}-\lambda^{p})} dt,
\]
which turns out to be a measure that is mutually absolutely continuous with respect to the surface measure on $S_\lambda$. The form
\[
\mathcal{N}_{\lambda}(f) := \int_{(\mathbb{R}^d)^2} \int_{S_\lambda} f(x,y) f(x+s,y) f(x,y+s) d\sigma_{\lambda}(s) dx dy
\]
counts corners with respect to this measure. The main idea is to approximate $\mathcal{N}_{\lambda}(f)$ by a more convenient and smoother integral, defined using an appropriate Schwartz cutoff function, at which point we will be able to count the number of corners using a result from additive combinatorics.

Let $\psi\colon\mathbb{R}\to[0,1]$ be a Schwartz function such that $\widehat{\psi}$ is nonnegative and compactly supported, $\psi(0)=1$, and $\widehat{\psi}(1)>0$.
All constants in any estimates that follow are allowed to depend on $\psi$ and this dependence will be suppressed from the notation.

For $\varepsilon,\lambda>0$ define a function $\omega_{\lambda}^{\varepsilon}\colon\mathbb{R}^d\to\mathbb{C}$ that approximates the measure $\sigma_{\lambda}$ by
\[
\omega_{\lambda}^{\varepsilon}(s) := \lambda^{-d+p} \int_\mathbb{R} e^{2\pi \mathbbm{i} t(\|s\|_{\ell^p}^{p}-\lambda^{p})} \psi(\varepsilon\lambda^{p}t) dt
= \lambda^{-d} \varepsilon^{-1} \widehat{\psi}\Big(\varepsilon^{-1}\big(1-\|\lambda^{-1}s\|_{\ell^p}^p\big)\Big).
\]
It is a nonnegative, bounded, and compactly supported function (by our assumptions on $\widehat{\psi}$). Note that
\[
\omega_{\lambda}^{\varepsilon}(s) = \lambda^{-d} \omega_{1}^\varepsilon (\lambda^{-1}s),
\]
so the notation is still consistent with \eqref{eq:normalizationdef} from the previous section.
Moreover, in \cite{CMP15} it is shown that
\begin{equation}\label{eq:cepsdef}
\int_{\mathbb{R}^d} \omega_{\lambda}^{\varepsilon}(s) ds = c_1(\varepsilon) \int_{\mathbb{R}^d} \omega_{\lambda}^{1}(s) ds,
\end{equation}
where
\begin{equation}\label{eq:cepssim}
c_1(\varepsilon)\sim_{p,d} 1,
\end{equation}
for $0<\varepsilon<1/100d$.
Define
\[
\mathcal{M}_{\lambda}^{\varepsilon}(f) :=\int_{(\mathbb{R}^d)^3} f(x,y) f(x+s,y) f(x,y+s) \omega_{\lambda}^\varepsilon (s) ds dx dy.
\]

The first goal is to prove that $\mathcal{M}_{\lambda}^{1}(f)$ is large provided that the function $0\leq f\leq 1$ is dense.

\begin{proposition}\label{prop:m1large}
For any $1<p<\infty$, any positive integer $d$, any $0<\delta\leq 1$, and any $\lambda$ and $N$ satisfying $0<\lambda\leq N$ the following holds.
If $f\colon\mathbb{R}^d \times \mathbb{R}^d\to[0,1]$ is a measurable function supported in $[0,N]^d \times [0,N]^d$ and such that $\int_{[0,N]^{2d}} f \geq \delta N^{2d}$, then
\[
\mathcal{M}_{\lambda}^{1}(f) \gtrsim_{p,d,\delta} N^{2d}.
\]
\end{proposition}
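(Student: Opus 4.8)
The plan is to bound $\mathcal{M}_\lambda^1(f)$ from below by averaging over translations and rescalings of $f$, thereby reducing the claim to a positive-density count of corners in a bounded box, which in turn follows from the quantitative corners theorem of Ajtai--Szemer\'edi (or more precisely the measure-theoretic strengthening used in \cite{CMP15}). First I would observe that since $\omega_\lambda^1$ is a nonnegative bounded compactly supported function with $\int_{\mathbb{R}^d}\omega_\lambda^1(s)\,ds \sim_{p,d} 1$ (this uses that $\widehat{\psi}\geq 0$ and $\widehat{\psi}(1)>0$, together with \eqref{eq:cepssim} at $\varepsilon=1$), the function $\omega_\lambda^1$ dominates a constant multiple of the normalized indicator of a fixed ball: there is a radius $r\sim_{p,d}1$ and a constant $c\sim_{p,d}1$ such that $\omega_\lambda^1(s)\geq c\lambda^{-d}\mathbbm{1}_{B_r}(\lambda^{-1}s)$ for $s$ in a set of measure $\gtrsim_{p,d}\lambda^d$ — indeed $\widehat\psi$ is continuous and positive near $1$, and $\|\lambda^{-1}s\|_{\ell^p}^p$ takes values near $1$ on an annular region of the required size. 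Hence it suffices to prove the lower bound with $\omega_\lambda^1$ replaced by $c\lambda^{-d}\mathbbm{1}_{E}(\lambda^{-1}\cdot)$ for a suitable fixed bounded set $E\subseteq\mathbb{R}^d$ of positive measure, uniformly in $\lambda$.

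Next I would exploit the scale separation $\lambda\leq N$. The corner count $\int f(x,y)f(x+s,y)f(x,y+s)\,ds\,dx\,dy$ with $s$ ranging over $\lambda E$ only sees $f$ on triples contained in a box of side comparable to $N$. The standard device (as in \cite{CMP15}) is to tile $[0,N]^{2d}$ by a grid of subcubes of side $L$ with $\lambda\ll L\ll N$ — say $L = $ a large fixed multiple of $\lambda$, or better, to average the scaled-down problem over a range of intermediate scales — and to note that by Fubini a positive proportion of the $L$-subcubes $Q$ carry a $\gtrsim_\delta$-density restriction of $f$. Rescaling each such good subcube to unit size converts the problem into: a function $0\leq g\leq 1$ on $[0,1]^{2d}$ with $\int g\geq \delta'$ contains corners $(x,y),(x+s,y),(x,y+s)$ with $s$ in a fixed positive-measure set, and moreover the set of such $(x,y,s)$ has measure bounded below in terms of $\delta'$. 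This last statement is precisely the (measure-theoretic, single-scale) corners theorem; I would cite it from \cite{CMP15} or \cite{AS74}/\cite{Shk06a} in the form used there, rather than reprove it. Undoing the rescalings and summing over the good subcubes recovers $\mathcal{M}_\lambda^1(f)\gtrsim_{p,d,\delta} N^{2d}$, with the correct homogeneity checked by tracking the Jacobians ($ds\,dx\,dy$ scales like $L^{3d}\cdot(\lambda/L)^d$ per cube, times $(N/L)^{2d}$ cubes, giving $N^{2d}\lambda^d\cdot\lambda^{-d}=N^{2d}$ after the $\lambda^{-d}$ from $\omega_\lambda^1$).

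The main obstacle is getting the measure-theoretic corners input in the exact shape needed and managing the two-parameter scaling ($\lambda$ for the side length, $N$ for the box, with $\lambda$ possibly much smaller than $N$) so that the lower bound is genuinely uniform in $\lambda\in(0,N]$ and depends on $\delta$ alone in the right way. One must be slightly careful that when $\lambda$ is tiny compared to $N$ the relevant subcube scale $L$ is chosen correctly — one wants $L$ comparable to $\lambda$ (not to $N$), so that the rescaled side-length set is a fixed set independent of all parameters; the density $\delta$ of $f$ on $[0,N]^{2d}$ then passes, by pigeonholing over the $(N/L)^{2d}$ subcubes, to a density $\gtrsim\delta$ on a positive fraction of them, which is what feeds the corners theorem. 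Since this entire argument is carried out in \cite{CMP15} for $3$-term progressions and the corner case is formally identical after the coordinate change $y-x\mapsto$ (progression variable), I would present the proof by indicating the dominate-by-a-ball reduction explicitly and then referring to \cite{CMP15} for the density-increment/pigeonhole bookkeeping, noting only that no feature special to progressions was used there.
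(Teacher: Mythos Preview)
Your approach is essentially the same as the paper's: reduce to a corners count on a unit-scale box via tiling and pigeonholing, following the structure of \cite{CMP15}, and invoke a measure-theoretic corners theorem as the combinatorial input. The paper's own proof is just one sentence referring to \cite{CMP15} together with Lemma~\ref{prop:cornersunitbox}.

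One point to flag: you propose to cite the unit-box corners input ``from \cite{CMP15} or \cite{AS74}/\cite{Shk06a}'', but \cite{CMP15} only contains the arithmetic-progressions version, and \cite{AS74}, \cite{Shk06a} are finitary. The paper explicitly remarks that it could not locate the corners theorem on compact abelian groups in the literature and therefore proves Lemma~\ref{prop:cornersunitbox} separately, deducing it from the finitary Ajtai--Szemer\'edi theorem by a Varnavides averaging argument. So your plan works, but the ``cite rather than reprove'' step needs to be replaced by that short lemma. Also, your closing remark that the corner case is ``formally identical after the coordinate change $y-x\mapsto$ (progression variable)'' applies only to the outer tiling/pigeonhole bookkeeping, not to the combinatorial input itself --- corners do not reduce to progressions in that direction.
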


When proving Proposition \ref{prop:m1large}, we borrow the following idea from \cite{CMP15}. In that paper the authors cut $\mathbb{R}^d$ into boxes that can be thought of as scaled images of $[0,1]^d$. On each of these boxes one then uses Roth's theorem for compact abelian groups \cite{Bou86}, the underlying group being the $d$-dimensional torus $\mathbb{T}^d$. We prove a similar result regarding corners in the unit box $[0,1]^d \times [0,1]^d$, which is equivalent to the same statement on $\mathbb{T}^d \times \mathbb{T}^d$.

\begin{lemma}\label{prop:cornersunitbox}
Let $0<\delta\leq 1$ and let $f\colon\mathbb{R}^d \times \mathbb{R}^d \to[0,1]$ be a measurable function supported in $[0,1]^d\times[0,1]^d$ and such that $\int_{[0,1]^{2d}}f \geq\delta$. Then
\[
\int_{([0,1]^d)^3} f(x,y) f(x+s,y) f(x,y+s) ds dx dy \gtrsim_{d,\delta} 1.
\]
\end{lemma}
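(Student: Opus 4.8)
The plan is to deduce Lemma~\ref{prop:cornersunitbox} from the known corners theorem on a compact abelian group, taking the underlying group to be the $d$-dimensional torus $\mathbb{T}^d=\mathbb{R}^d/\mathbb{Z}^d$ and transferring the problem from the box $[0,1]^d$ to $\mathbb{T}^d$. Concretely, given $f$ supported in $[0,1]^d\times[0,1]^d$, I would first restrict attention to where the argument shifts stay inside the relevant domains. The subtlety is that the inner integral runs over $s$ and $x$, $y$ with $x,x+s,y,y+s$ all in $[0,1]^d$ simultaneously, so one cannot simply periodize $f$ and appeal to a torus statement: the shift $x+s$ on $\mathbb{T}^d$ need not correspond to a point of $[0,1]^d$. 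To handle this I would localize to a slightly smaller cube. Choose a small absolute constant, say shrink to $[1/4,3/4]^d$ or to a cube of side $1/2$; since $\int_{[0,1]^{2d}}f\ge\delta$ and $0\le f\le 1$, a pigeonhole/averaging argument over translates of a fixed sub-cube $Q$ of side $1/2$ shows that for some translate $Q$ the restriction of $f$ to $Q\times Q$ still has integral $\gtrsim_d\delta$ (relative density at least a dimensional constant times $\delta$). Rescaling $Q$ to $[0,1]^d$ reduces to the case where $f$ is supported in a half-size sub-cube sitting well inside $[0,1]^d$, with density $\gtrsim_d\delta$ there.

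Next I would periodize. Let $\widetilde f\colon\mathbb{T}^d\times\mathbb{T}^d\to[0,1]$ be the function induced by $f$ (well-defined since $f$ is supported in a sub-cube strictly inside a fundamental domain). Then $\int_{\mathbb{T}^{2d}}\widetilde f\gtrsim_d\delta$. By the Euclidean/torus version of the corners theorem for compact abelian groups — which is precisely Roth-type statement for corners in $\mathbb{T}^d\times\mathbb{T}^d$, obtainable from the same Fourier-analytic argument used for three-term progressions in compact abelian groups as in \cite{Bou86}, or alternatively from Ajtai--Szemer\'edi-type density results transferred to the torus — one gets
\[
\int_{\mathbb{T}^d}\int_{\mathbb{T}^d}\int_{\mathbb{T}^d} \widetilde f(x,y)\,\widetilde f(x+s,y)\,\widetilde f(x,y+s)\,ds\,dx\,dy \gtrsim_{d,\delta} 1.
\]
The point of having first passed to a small sub-cube is that when $x,y$ range over (the image of) that sub-cube and $s$ is small — say $\|s\|_{\ell^\infty}<1/4$ — the torus-additions $x+s$, $y+s$ coincide with the genuine Euclidean additions and land back inside $[0,1]^d$, so the torus count is controlled by the Euclidean count appearing in the lemma (after undoing the rescaling, which only costs a dimensional constant). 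To see that the bulk of the torus count comes from small $s$, I would note that the above integral over all $s\in\mathbb{T}^d$ is bounded, and either (i) arrange the positivity argument to produce the lower bound already localized to small $s$ — e.g. run the standard Fourier argument which naturally gives a bound for $\int\widetilde f(x,y)\widetilde f(x+s,y)\widetilde f(x,y+s)\phi(s)\,ds\,dx\,dy$ with $\phi$ a fixed nonnegative bump supported near $0$ with $\int\phi=1$ — or (ii) replace $f$ at the outset by its convolution against a small smooth bump in the $s$-direction, reducing to a smoothed count, and absorb the smoothing into $\gtrsim_{d,\delta}1$.

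The main obstacle, and the step that needs the most care, is exactly this reconciliation between the torus and the box: making sure that the corners counted on $\mathbb{T}^d\times\mathbb{T}^d$ can be taken to have a side $s$ small enough that $x+s,y+s$ remain in $[0,1]^d$ whenever $x,y$ are in the chosen sub-cube. The cleanest route, which I would follow, is to prove the weighted/smoothed torus statement directly: let $\phi$ be a fixed nonnegative Schwartz bump on $\mathbb{R}^d$ with $\int\phi=1$ and support in $\{\|s\|_{\ell^\infty}<1/4\}$, regard it as a density on $\mathbb{T}^d$, and show $\int_{\mathbb{T}^{2d}}\int_{\mathbb{T}^d}\widetilde f(x,y)\widetilde f(x+s,y)\widetilde f(x,y+s)\phi(s)\,ds\,dx\,dy\gtrsim_{d,\delta}1$ via the standard density-increment or Fourier argument; since $\widehat\phi(0)=1$ and $\phi\ge0$ this is a routine variant of Roth's theorem for corners on compact abelian groups. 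Everything else — the pigeonhole to pass to a sub-cube, the rescaling, the identification of torus-addition with Euclidean addition on the support, and dropping the bounded weight $\phi$ to recover the unweighted integral in the lemma — is bookkeeping with dimensional constants. I would also remark that this is the analogue for corners of the step in \cite{CMP15} that invokes Roth's theorem for $\mathbb{T}^d$, so one may simply cite the corners analogue there if it is available, but giving the self-contained torus reduction is cleaner.
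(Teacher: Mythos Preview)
Your route differs from the paper's, and as written it has two gaps.

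The pigeonhole step is false as stated: for $f=\mathbbm{1}_{[0,1/4]^d\times[3/4,1]^d}$ one has $\int f=16^{-d}>0$, yet no side-$\tfrac12$ cube $Q$ can meet both $[0,\tfrac14]^d$ and $[\tfrac34,1]^d$ in positive measure, so $\int_{Q\times Q}f=0$ for every such $Q$. This is easily repaired by pigeonholing over products $Q_1\times Q_2$ of possibly different sub-cubes and translating the two factors independently (which preserves the corner count), or by simply restricting to $[\epsilon,1-\epsilon]^{2d}$ at a loss of $O(d\epsilon)$ in density; but the step fails as you wrote it.

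More seriously, you have deferred the whole content of the lemma to a ``routine'' weighted corners theorem on $\mathbb{T}^d\times\mathbb{T}^d$. Unlike Roth's theorem, corners are not controlled by ordinary Fourier coefficients but by a box norm, and the relevant density increment is a genuine Shkredov-type argument rather than a one-liner; the paper explicitly remarks that it could not locate a citable torus statement. Precisely for this reason the paper does \emph{not} pass to the torus. It instead runs a short Varnavides averaging directly from the finitary Ajtai--Szemer\'edi theorem: fix $n=n(\delta)$ so that every subset of $\{0,\dots,n-1\}^2$ of density $\delta/8$ contains a corner, threshold $f$ to $A=\{f\ge\delta/2\}$, and average over grids $\{(u+it,v+jt):0\le i,j<n\}$ with $t\in(0,\epsilon/n]^d$ and $u,v\in[0,1-\epsilon]^d$; a set of such $(t,u,v)$ of measure $\gtrsim_{d,\delta}1$ yields $\delta/8$-dense grids, each then carries an integer corner with side $k\ge 1$, and the change of variables $s=kt\in(0,\epsilon]^d$ drops the count into the integral $\int_{[0,1]^{3d}}\mathbbm{1}_A(x,y)\mathbbm{1}_A(x+s,y)\mathbbm{1}_A(x,y+s)\,ds\,dx\,dy$. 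This is a few lines from a standard citable input, whereas your approach still owes the reader either a proof of the torus inequality (whose most natural proof is this same Varnavides transfer) or a reference the paper could not find. As a minor additional point, your torus count naturally produces $s$ in a symmetric neighborhood of $0$, so a further step is needed to land in $s\in[0,1]^d$ as the lemma requires.
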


Even though this lemma could be considered a quantitative variant of the well-known corners theorem \cite{AS74}, we could not find the exact reference to the corners theorem on compact abelian groups in the literature, so we deduce Lemma~\ref{prop:cornersunitbox} from its more familiar finitary formulation using the averaging trick of Varnavides \cite{Var59}.

\begin{proof}[Proof of Lemma~\ref{prop:cornersunitbox}]
Suppose that a positive integer $n$ is large enough so that each subset $S\subseteq\{0,1,\ldots,n-1\}^2$ of cardinality at least $(\delta/8)n^2$ must contain a corner. Such $n$ certainly exists by the result of Ajtai and Szemer\'{e}di \cite{AS74}, and by the theorem of Shkredov \cite{Shk06b} we even know that it is sufficient to take any $n\geq\exp (\exp(8/\delta)^c)$ for some absolute constant $c$.

First, we note that the set
\[
A := \Big\{(x,y)\in[0,1]^d\times[0,1]^d : f(x,y)\geq\frac{\delta}{2}\Big\}
\]
has measure at least $\delta/2$ and that $f\geq(\delta/2)\mathbbm{1}_A$, where $\mathbbm{1}_A$ denotes the indicator function of $A$. Therefore, it is enough to show
\begin{equation}\label{eq:cornersbound}
\int_{([0,1]^d)^3} \mathbbm{1}_A(x,y) \mathbbm{1}_A(x+s,y) \mathbbm{1}_A(x,y+s) ds dx dy \gtrsim_{d,\delta} 1.
\end{equation}

Take $\epsilon=\delta/16d$ and observe
\[
\fint_{(0,\epsilon/n]^d\times([0,1-\epsilon]^d)^2}\limits
\Big(\frac{1}{n^2} \sum_{i,j=0}^{n-1} \mathbbm{1}_{A}(u+it,v+jt)\Big) dt du dv
\geq |A\cap[\epsilon,1-\epsilon]^{2d}| \geq \frac{\delta}{2}-4d\epsilon = \frac{\delta}{4},
\]
where $\fint$ denotes the average value of the function on the given set. Defining
\[
T := \Big\{ (t,u,v)\in(0,\epsilon/n]^d\times[0,1-\epsilon]^d\times[0,1-\epsilon]^d : \frac{1}{n^2} \sum_{i,j=0}^{n-1} \mathbbm{1}_{A}(u+it,v+jt)\geq\frac{\delta}{8} \Big\},
\]
from the previous estimate we get
\begin{equation}\label{eq:slowerest}
|T| \geq \frac{\delta}{8} \Big(\frac{\epsilon}{n}\Big)^d (1-\epsilon)^{2d} \gtrsim_{d,\delta} 1.
\end{equation}
For each triple $(t,u,v)\in T$ we consider the set
\[
B_{t,u,v} := \big\{(i,j)\in\{0,1,\ldots,n-1\}^2 : (u+it,v+jt)\in A\big\}.
\]
Since $B_{t,u,v}$ contains at least $(\delta/8)n^2$ elements, by the choice of $n$ we conclude that $B_{t,u,v}$ must contain a corner $(i,j)$, $(i+k,j)$, $(i,j+k)$, which can be rewritten as
\[
\sum_{\substack{i,j,k\in\{0,1,\ldots,n-1\}\\ k\geq 1,\ i+k,j+k\leq n-1}}
\mathbbm{1}_{A}(u+it,v+jt) \mathbbm{1}_{A}(u+it+kt,v+jt) \mathbbm{1}_{A}(u+it,v+jt+kt) \geq 1.
\]
Integrating this over $(t,u,v)\in T$, using \eqref{eq:slowerest}, and changing variables to
\[
x=u+it,\quad y=v+jt,\quad s=kt
\]
we obtain
\begin{equation}\label{eq:cornersbound2}
\sum_{\substack{i,j,k\in\{0,1,\ldots,n-1\}\\ k\geq 1,\ i+k,j+k\leq n-1}}
\frac{1}{k^d} \int_{[0,k\epsilon/n]^d\times([0,1]^d)^2} \mathbbm{1}_A(x,y) \mathbbm{1}_A(x+s,y) \mathbbm{1}_A(x,y+s) ds dx dy \gtrsim_{d,\delta} 1.
\end{equation}
It remains to observe that the left-hand side of \eqref{eq:cornersbound2} is at most $n^3$ times the left-hand side of \eqref{eq:cornersbound}, recalling that $n$ can be taken to be a function depending only on $\delta$.
\end{proof}

\begin{proof}[Proof of Proposition~\ref{prop:m1large}]
It is straightforward to adapt the proof of the analogous proposition from \cite{CMP15} in the language of corners, replacing Roth's theorem on compact abelian groups \cite{Bou86} with Lemma~\ref{prop:cornersunitbox}.
\end{proof}

Our next aim is to prove that $\mathcal{N}_{\lambda}$ and $\mathcal{M}_{\lambda}^{\varepsilon}$ are in some sense close to each other.

\begin{proposition}\label{prop:nmclose}
For any $p\in(1,2)\cup(2,\infty)$ there exists $\gamma_p>0$ such that for any positive integer $d$, any $0<\varepsilon<1$, and any $\lambda$ and $N$ satisfying $0<\lambda\leq N$ the following holds.
If $f\colon\mathbb{R}^d\times\mathbb{R}^d\to[-1,1]$ is a measurable function supported in $[0,N]^d \times [0,N]^d$, then
\[
| \mathcal{N}_{\lambda}(f) - \mathcal{M}_{\lambda}^{\varepsilon}(f) | \lesssim_{p,d} \varepsilon^{d\gamma_p -1} N^{2d}.
\]
\end{proposition}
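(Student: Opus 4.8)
My plan for Proposition~\ref{prop:nmclose} is to follow the corresponding estimate in \cite{CMP15} almost verbatim, so I will only indicate the structure and the one point where corners behave differently from three-term progressions. By a routine approximation --- using that $\sigma_\lambda$ has finite total mass while $\omega_\lambda^\varepsilon\in\textup{L}^\infty$ is compactly supported, so that both $\mathcal{N}_\lambda$ and $\mathcal{M}_\lambda^\varepsilon$ are continuous under $\textup{L}^2$-convergence of uniformly bounded functions inside a fixed compact set --- one reduces to the case of a real-valued $f\in\mathcal{S}(\mathbb{R}^{2d})$, and no bound below will depend on the smoothness of $f$. Setting $\nu_\lambda^\varepsilon:=\sigma_\lambda-\omega_\lambda^\varepsilon$ and
\[
\Lambda_f(s):=\int_{(\mathbb{R}^d)^2}f(x,y)\,f(x+s,y)\,f(x,y+s)\,dx\,dy ,
\]
one expands the three copies of $f$ by Fourier inversion and integrates out $x$ and $y$; this imposes two linear relations on the $2d$-dimensional frequencies and leaves the $s$-integral, which produces the scalar factor $\widehat{\nu_\lambda^\varepsilon}(\zeta)=\widehat{\sigma_\lambda}(\zeta)-\widehat{\omega_\lambda^\varepsilon}(\zeta)$, with $\zeta\in\mathbb{R}^d$ dual to the side $s$. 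In short,
\[
\mathcal{N}_\lambda(f)-\mathcal{M}_\lambda^\varepsilon(f)=\int_{\mathbb{R}^d}\widehat{\Lambda_f}(\zeta)\,\widehat{\nu_\lambda^\varepsilon}(\zeta)\,d\zeta .
\]
The only structural difference from \cite{CMP15} is that the side frequency $\zeta$ is now the combination of the two $d$-dimensional frequency blocks dual to the shift $s$ in the slots $(x+s,y)$ and $(x,y+s)$, rather than the combination appearing in the progression $x,x+s,x+2s$; this is immaterial for every estimate that follows. Note also $\Lambda_f$ is supported in $[-N,N]^d$ and $\|\Lambda_f\|_{\textup{L}^\infty}\le\|f\|_{\textup{L}^\infty}\|f\|_{\textup{L}^2}^2$.

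Next I would import from \cite{CMP15} the two facts about $\sigma_\lambda$ and $\omega_\lambda^\varepsilon$ that govern the estimate; these concern the two objects in isolation and so carry over with no change. There is an exponent $\rho=\rho(p,d)>0$ that grows linearly in $d$ for fixed $p$ --- and this is exactly where $p\neq 2$ enters, guaranteeing enough non-vanishing curvature of the $\ell^p$-sphere $S_1$ in most directions, with only a mild degeneracy near the $2d$ poles --- such that $|\widehat{\sigma_\lambda}(\zeta)|\lesssim_{p,d}(1+\lambda\|\zeta\|)^{-\rho}$; moreover $\widehat{\omega_\lambda^\varepsilon}$ agrees with $\widehat{\sigma_\lambda}$ up to a relative error $O(\varepsilon\lambda\|\zeta\|)$ when $\lambda\|\zeta\|\lesssim\varepsilon^{-1}$, while decaying rapidly past the threshold $\lambda\|\zeta\|\sim\varepsilon^{-1}$ (morally, $\omega_\lambda^\varepsilon$ is $\sigma_\lambda$ smoothed at scale $\varepsilon$ in the radial variable). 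One then picks $\gamma_p>0$ so that $d\gamma_p$ is, up to lower-order terms, the part of $\rho$ that survives the subsequent losses. I would also reuse the box decomposition of \cite{CMP15}: tile $[0,N]^d\times[0,N]^d$ by $O\big((N/\lambda)^{2d}\big)$ cubes of side comparable to $\lambda$, split $f$ accordingly, control the overcounting from patterns straddling several cubes exactly as in \cite{CMP15}, and rescale each cube to the unit cube via $\sigma_\lambda(s)=\lambda^{-d}\sigma_1(\lambda^{-1}s)$ and $\omega_\lambda^\varepsilon(s)=\lambda^{-d}\omega_1^\varepsilon(\lambda^{-1}s)$. This reduces everything to the scale-free bound $|\mathcal{N}_1(f)-\mathcal{M}_1^\varepsilon(f)|\lesssim_{p,d}\varepsilon^{d\gamma_p-1}$ for $f$ supported in a fixed dilate of the unit cube with $\|f\|_{\textup{L}^\infty}\le 1$; summing over the cubes then restores the factor $(N/\lambda)^{2d}\cdot\lambda^{2d}=N^{2d}$ and makes the bound uniform in $\lambda$.

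For the scale-free bound I would decompose the $\zeta$-integral into dyadic shells $\|\zeta\|\sim R$. On each shell $|\widehat{\nu_1^\varepsilon}(\zeta)|\lesssim_{p,d}\varepsilon R(1+R)^{-\rho}$ when $R\lesssim\varepsilon^{-1}$ and $|\widehat{\nu_1^\varepsilon}(\zeta)|\lesssim_{p,d}R^{-\rho}$ when $R\gtrsim\varepsilon^{-1}$, by the two facts above, and the shell-localized part of the multilinear integral is controlled by a \emph{soft} Cauchy--Schwarz and Plancherel argument, using only the compact support of $f$, elementary $\textup{L}^2$ bounds such as $\|\Lambda_f\|_{\textup{L}^2}\lesssim 1$, and, if needed, frequency localization of the individual copies of $f$; this costs only a fixed power of $R$ on each shell. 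The point worth stressing is that here we deliberately do \emph{not} use the sharp singular-integral estimate of Theorem~\ref{thm:analytictheorem}: since any positive power of $\varepsilon$ already improves on the trivial bound $|\mathcal{N}_1(f)-\mathcal{M}_1^\varepsilon(f)|\lesssim_{p,d}1$, a lossy estimate suffices. Summing the resulting geometric-type series over dyadic $R$ --- the low shells $R\lesssim\varepsilon^{-1}$ contributing a positive power of $\varepsilon$ from the $\varepsilon$-factor, the high shells $R\gtrsim\varepsilon^{-1}$ from the decay exponent $\rho$ --- produces the bound $\varepsilon^{d\gamma_p-1}$ once $d$ is large enough. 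I expect the main obstacle to be exactly this bookkeeping: the decay exponent $\rho$ of $\widehat{\sigma_\lambda}$ is the scarce resource, and one must check that after the deliberately lossy per-shell multilinear bound, and after absorbing an unavoidable $\varepsilon^{-1}$-type loss from the least favorable shell, a net positive power of $\varepsilon$ still remains --- which is precisely what forces the restriction $d\geq d_p$; keeping all constants independent of $\lambda$ is a secondary point, handled by the box decomposition.
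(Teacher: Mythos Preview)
Your approach is genuinely different from the paper's, and the point you flag as ``the main obstacle'' is in fact a gap, not just bookkeeping. The paper's proof is two lines: three applications of Cauchy--Schwarz (the generalized von Neumann inequality, now for corners rather than progressions) give
\[
\Big|\int_{(\mathbb{R}^d)^3} f(x,y)\,f(x+s,y)\,f(x,y+s)\,g(s)\,ds\,dx\,dy\Big| \lesssim N^{2d}\lambda^{d/2}\,\|g\|_{\textup{U}^3}
\]
whenever $g$ is supported in a fixed dilate of $[-\lambda,\lambda]^d$, and then one quotes from \cite{CMP15} the estimate $\|\sigma_\lambda-\omega_\lambda^\varepsilon\|_{\textup{U}^3}\lesssim_{p,d}\lambda^{-d/2}\varepsilon^{d\gamma_p-1}$. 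The powers of $\lambda$ cancel via the scaling identity \eqref{eq:ukscaling}, so no box decomposition is needed at all.

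Your shell-by-shell route, combining pointwise decay of $\widehat{\nu_1^\varepsilon}$ with the elementary bound $\|\Lambda_f\|_{\textup{L}^2}\lesssim 1$, does not close. On the shell $\|\zeta\|\sim R$, Cauchy--Schwarz against $\|\Lambda_f\|_{\textup{L}^2}$ costs the square root of the shell volume, a factor $R^{d/2}$. But the uniform pointwise decay exponent $\rho$ of $\widehat{\sigma_1}$ on the $\ell^p$-sphere never exceeds $(d-1)/2$, and for $p\neq 2$ it is strictly smaller: when $p>2$, for instance, the Gaussian curvature vanishes at the $2d$ coordinate poles and stationary phase in those directions gives only $\rho\leq (d-1)/p$. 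Hence the high-frequency tail $\sum_{R\gtrsim\varepsilon^{-1}} R^{d/2-\rho}$ has a positive exponent and diverges for \emph{every} $d$; increasing $d$ widens the gap rather than closing it, so this is not what drives the restriction $d\geq d_p$. What \cite{CMP15} actually controls is the $\textup{U}^3$-norm of $\sigma_1-\omega_1^\varepsilon$, an eighth-power average in which the small set of bad directions near the poles carries negligible weight---this is precisely why the argument must pass through the von Neumann inequality rather than through a pointwise bound on $\widehat{\sigma_1}$. If you flesh out your ``frequency localization of the individual copies of $f$'' as three successive Cauchy--Schwarz steps, you will produce $\|g\|_{\textup{U}^3}$ and land on the paper's proof, at which point both the box decomposition and the dyadic shell decomposition become unnecessary.
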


The proof of this proposition uses \emph{uniformity norms} or the \emph{$\textup{U}^k$-norms}, which Gowers introduced in his work on Szemer\'edi's theorem on the integers \cite{Gow98},\cite{Gow01}. For a measurable function $f\colon\mathbb{R}^d\to\mathbb{C}$ we define the \emph{Gowers uniformity norm on $\mathbb{R}^d$ of degree $k$} by
\[
\|f\|_{\textup{U}^k}^{2^k} := \int_{(\mathbb{R}^d)^{k+1}} \Delta_{h_1} \cdots \Delta_{h_k} f(x) dx dh_1 \cdots dh_k,
\]
where $\Delta_{h} f(x) := f(x) \overline{f(x+h)}$. A linear change of variables immediately yields
\begin{equation}\label{eq:ukscaling}
\|f_t\|_{\textup{U}^k} = t^{-d(1-(k+1)/2^k)} \|f\|_{\textup{U}^k}
\end{equation}
for all $t>0$.

\begin{proof}[Proof of Proposition~\ref{prop:nmclose}]
By a density argument we can assume that $f$ is continuous.

From the discussion preceding the proof we know how $\| \omega_{\lambda}^{\eta} - \omega_{\lambda}^{\varepsilon} \|_{\textup{U}^3}$ is defined for $0<\eta<\epsilon$.
The authors of \cite{CMP15} also give a meaning to $\| \sigma_{\lambda} - \omega_{\lambda}^{\varepsilon} \|_{\textup{U}^3}$ by interpreting it as the limit
$\lim_{\eta\rightarrow 0^+} \| \omega_{\lambda}^\eta - \omega_{\lambda}^{\varepsilon} \|_{\textup{U}^3}$,
which is justified by the facts that $(\omega_{\lambda}^{\eta})_{\eta>0}$ is a Cauchy net in the $\textup{U}^3$-norm and that it converges vaguely to $\sigma_{\lambda}$ as $\eta\rightarrow 0^+$.
Moreover, in \cite{CMP15} it is shown that for any $1<p<\infty$, $p\neq 2$ there exists a constant $\gamma_p>0$ such that for each integer $d$, any $0<\varepsilon<1$, and any $\lambda>0$ one has
\begin{equation}\label{eq:propaux1}
\| \sigma_{\lambda} - \omega_{\lambda}^{\varepsilon} \|_{\textup{U}^3} \lesssim_{p,d} \lambda^{-d/2} \varepsilon^{d \gamma_p - 1}.
\end{equation}
Indeed, it suffices to work out the case $\lambda=1$ and the general result follows from the scaling identity \eqref{eq:ukscaling}.

On the other hand, by applying the Cauchy-Schwarz inequality three times, for an arbitrary measurable function $g\colon\mathbb{R}^d\to\mathbb{R}$ supported in a constant dilate of the cube $[-\lambda,\lambda]^d$ one obtains
\[
\Big| \int_{(\mathbb{R}^d)^3} f(x,y) f(x+s,y) f(x,y+s) g(s) ds dx dy \Big| \lesssim N^{2d} \lambda^{d/2} \|g\|_{\textup{U}^3},
\]
the so-called generalized von Neumann's theorem, this time for corners. Setting $g = \omega_{\lambda}^{\eta} - \omega_{\lambda}^{\varepsilon}$ and letting $\eta\rightarrow 0^+$ we get
\begin{equation}\label{eq:propaux2}
|\mathcal{N}_{\lambda} (f) - \mathcal{M}_{\lambda}^{\varepsilon}(f) | \lesssim N^{2d} \lambda^{d/2} \| \sigma_{\lambda} - \omega_{\lambda}^{\varepsilon} \|_{\textup{U}^3}.
\end{equation}
It remains to combine \eqref{eq:propaux1} and \eqref{eq:propaux2} and the claim follows.
\end{proof}

As the final step, we use Theorem \ref{thm:analytictheorem} to connect $\mathcal{M}_{\lambda}^{1}(f)$ and $\mathcal{M}_{\lambda}^{\varepsilon}(f)$, where $\lambda$ goes through a sequence of scalars. Motivated by \cite{CMP15}, we define
\[
k_{\lambda}^\varepsilon(s) := \omega_{\lambda}^{\varepsilon}(s) - c_1(\varepsilon) \omega_{\lambda}^{1}(s),
\]
which is consistent with the notation \eqref{eq:normalizationdef}, and also set
\[
\mathcal{E}_{\lambda}^{\varepsilon}(f) := \mathcal{M}_{\lambda}^{\varepsilon}(f) - c_1(\varepsilon) \mathcal{M}_{\lambda}^{1}(f),
\]
where $c_1(\varepsilon)$ is the constant from \eqref{eq:cepsdef}. We prove the following result.

\begin{proposition}\label{prop:ebounded}
Let $0<\varepsilon < 1$, and let $d$ and $J$ be positive integers. Suppose that $\lambda_1<\lambda_2<\cdots<\lambda_J$ are positive numbers such that $\lambda_{j+1}/\lambda_j \geq 2$ for each $1\leq j\leq J-1$. If $f\colon\mathbb{R}^d\times\mathbb{R}^d\to[-1,1]$ is a measurable function supported in $[0,N]^d \times [0,N]^d$, then
\begin{equation}\label{eq:eupperbound}
\sum_{j=1}^{J} |\mathcal{E}_{\lambda_j}^{\varepsilon}(f)|^2 \lesssim_{d,\varepsilon} N^{4d}.
\end{equation}
\end{proposition}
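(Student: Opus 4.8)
\emph{Setup for the Cauchy--Schwarz reduction.} Write
\[
\mathcal{E}_{\lambda}^{\varepsilon}(f) = \int_{(\mathbb{R}^d)^2} f(x,y)\, G_{\lambda}(x,y)\, dx\, dy ,
\qquad
G_{\lambda}(x,y) := \int_{\mathbb{R}^d} f(x+s,y)\, f(x,y+s)\, k_{\lambda}^{\varepsilon}(s)\, ds .
\]
Applying the Cauchy--Schwarz inequality to each summand and using $\|f\|_{\textup{L}^2}^2\leq N^{2d}$ gives
\[
\sum_{j=1}^{J} |\mathcal{E}_{\lambda_j}^{\varepsilon}(f)|^2 \;\leq\; \|f\|_{\textup{L}^2}^2 \sum_{j=1}^{J}\|G_{\lambda_j}\|_{\textup{L}^2}^2 \;\leq\; N^{2d}\sum_{j=1}^{J}\|G_{\lambda_j}\|_{\textup{L}^2}^2 ,
\]
so it suffices to prove $\sum_{j}\|G_{\lambda_j}\|_{\textup{L}^2}^2 \lesssim_{d,\varepsilon} N^{2d}$. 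Expanding the square and using that $f$ and $k_{\lambda}^{\varepsilon}$ are real-valued,
\[
\sum_{j=1}^{J}\|G_{\lambda_j}\|_{\textup{L}^2}^2 = \int_{(\mathbb{R}^d)^4} f(x+u,y)\, f(x,y+u)\, f(x+v,y)\, f(x,y+v)\, K(u,v)\, du\, dv\, dx\, dy ,
\]
with $K(u,v):=\sum_{j=1}^{J} k_{\lambda_j}^{\varepsilon}(u)\,k_{\lambda_j}^{\varepsilon}(v)$. This is precisely the integral form of Theorem~\ref{thm:analytictheorem} with $F=G=f$; granting its hypotheses for $K$, that theorem bounds it by $C\,\|f\|_{\textup{L}^4}^4\leq C\,N^{2d}$ with $C$ \emph{independent of $J$ and of the $\lambda_j$'s}. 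This scale-uniformity is the essential point: a $\textup{U}^3$/generalized von Neumann estimate of the type in Proposition~\ref{prop:nmclose} would produce a bound growing like $J$.

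\emph{Checking the hypotheses of Theorem~\ref{thm:analytictheorem} for $K$.} That $K$ is bounded and compactly supported is immediate for each fixed $J$, since every $k_{\lambda_j}^{\varepsilon}$ is a bounded compactly supported function. One must show $K=\widecheck{m}$ for a symbol $m$ obeying \eqref{eq:symbolest} with constants depending only on $d$ and $\varepsilon$ (and the fixed $p$). Since $k_{\lambda}^{\varepsilon}=(k_1^{\varepsilon})_{\lambda}$, writing $\mu:=\widehat{k_1^{\varepsilon}}$ one has $m(\xi,\eta)=\sum_{j=1}^{J}\mu(\lambda_j\xi)\,\mu(\lambda_j\eta)$. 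The decisive structural fact is that $\mu(0)=\int_{\mathbb{R}^d}k_1^{\varepsilon}=0$, which is exactly the content of \eqref{eq:cepsdef}. Together with decay of $\mu$ and its derivatives this is the standard mechanism making a lacunary sum of dilates a H\"ormander--Mihlin symbol: for a multi-index $\kappa=(\alpha,\beta)$ one writes $\partial^{\kappa}m(\xi,\eta)=\sum_{j}\lambda_j^{|\kappa|}(\partial^{\alpha}\mu)(\lambda_j\xi)(\partial^{\beta}\mu)(\lambda_j\eta)$, estimates each factor using $|\mu(\zeta)|\lesssim\min(|\zeta|,|\zeta|^{-N})$ (the first from $\mu(0)=0$, the second from decay of $\widehat{k_1^{\varepsilon}}$) and $|\partial^{\gamma}\mu(\zeta)|\lesssim\min(1,|\zeta|^{-N})$ for $|\gamma|\geq1$, and sums the resulting two-sided geometric series, which is concentrated at $\lambda_j\sim\|(\xi,\eta)\|^{-1}$ and contributes $\lesssim\|(\xi,\eta)\|^{-|\kappa|}$ uniformly in the sequence.

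\emph{The smoothness issue and the main obstacle.} The mechanism above requires $\mu=\widehat{k_1^{\varepsilon}}$ to decay at a rate exceeding the (finite, $d$-dependent) number of derivatives of $m$ consumed by the proof of Theorem~\ref{thm:analytictheorem}, whereas $k_1^{\varepsilon}=\omega_1^{\varepsilon}-c_1(\varepsilon)\omega_1^{1}$ inherits only the limited smoothness of $s\mapsto\|s\|_{\ell^p}^p$ near the coordinate hyperplanes, for general $p$. I would therefore first mollify: fix $\Phi\in\textup{C}^{\infty}_c(\mathbb{R}^d)$ with $\Phi\geq0$, $\int\Phi=1$, replace $k_{\lambda}^{\varepsilon}$ by $\overline{k}_{\lambda}^{\varepsilon}:=k_{\lambda}^{\varepsilon}\ast\Phi_{\varepsilon\lambda}$, so that $\widehat{\overline{k}_1^{\varepsilon}}=\mu\cdot\widehat{\Phi}(\varepsilon\,\cdot)$ is a Schwartz function still vanishing at the origin, the lacunary computation goes through for all $\kappa$, and Theorem~\ref{thm:analytictheorem} applies to the corresponding kernel $\overline{K}$; and then dispose of the correction $k_{\lambda}^{\varepsilon}-\overline{k}_{\lambda}^{\varepsilon}$ by a telescoping decomposition across dyadic sub-scales of $\varepsilon\lambda$, each piece having a Schwartz Fourier transform vanishing at the origin and carrying a smallness geometric in the sub-scale that comes from the (nontrivial, since $p>1$) Sobolev regularity of $\omega_1^{\varepsilon}$, and summing over the sub-scales by Minkowski's inequality in $\ell^2_j$. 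Verifying the symbol bounds for the mollified kernel uniformly in the $\lambda_j$'s, and organizing these error terms so that their smallness outpaces the growth in the sub-scale parameter coming from the higher-order symbol estimates, is the step I expect to be the main obstacle; everything else is bookkeeping around Theorem~\ref{thm:analytictheorem} and the Cauchy--Schwarz reduction above.
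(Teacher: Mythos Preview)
Your reduction is exactly the paper's: Cauchy--Schwarz in $(x,y)$ against $f$, expand the square, and recognize the quadrilinear form of Theorem~\ref{thm:analytictheorem} with kernel $K(u,v)=\sum_{j}k_{\lambda_j}^{\varepsilon}(u)k_{\lambda_j}^{\varepsilon}(v)$; then $\|f\|_{\textup{L}^2}^2\|f\|_{\textup{L}^4}^4\leq N^{4d}$ finishes it.

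The only divergence is in how the hypotheses on $K$ are handled. The paper does not verify the symbol estimates \eqref{eq:symbolest} for $m=\widehat{K}$ at all; it simply cites \cite{CMP15}, where precisely this kernel (a lacunary sum of dilates of $k_1^{\varepsilon}\otimes k_1^{\varepsilon}$) arises in the analogous step for $3$-term progressions and the required bounds, with constants depending only on $d$ and $\varepsilon$, are already proved. Your instinct that the verification is nontrivial is correct, and your sketch (mean zero of $\mu=\widehat{k_1^{\varepsilon}}$ from \eqref{eq:cepsdef}, plus decay, summed over a lacunary sequence) is the right mechanism. But the smoothness worry that leads you to the mollification-plus-telescoping detour is not needed: in \cite{CMP15} the decay of $\widehat{\omega_1^{\varepsilon}}$ and its derivatives is obtained by oscillatory-integral (stationary phase) arguments exploiting the geometry of the $\ell^p$-sphere, not by differentiating $\omega_1^{\varepsilon}$ many times, so the limited regularity of $s\mapsto\|s\|_{\ell^p}^p$ near the coordinate hyperplanes is not the obstruction you fear. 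In short, your ``main obstacle'' dissolves once you invoke \cite{CMP15}, and the proof collapses to the two-line argument the paper gives.
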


\begin{proof}[Proof of Proposition~\ref{prop:ebounded}]
Using the definition of $\mathcal{E}_{\lambda}^{\varepsilon}(f)$ and applying the Cauchy-Schwarz inequality we estimate:
\begin{align*}
\sum_{j=1}^{J} |\mathcal{E}_{\lambda_j}^{\varepsilon}(f)|^2
& \leq \sum_{j=1}^{J} \Big( \int_{(\mathbb{R}^d)^2} f(x,y) \Big| \int_{\mathbb{R}^d} f(x+s,y) f(x,y+s) k_{\lambda_j}^\varepsilon(s) ds \Big| dx dy \Big)^2 \\
& \leq \|f\|_{\textup{L}^2}^2 \sum_{j=1}^{J} \int_{(\mathbb{R}^d)^2} \Big( \int_{\mathbb{R}^d} f(x+s,y) f(x,y+s) k_{\lambda_j}^\varepsilon (s) ds \Big)^2 dx dy \\
& = \|f\|_{\textup{L}^2}^2 \int_{(\mathbb{R}^d)^4} f(x+u,y) f(x,y+u) f(x+v,y) f(x,y+v) K(u,v) du dv dx dy,
\end{align*}
where we have written $K(u,v) := \sum_{j=1}^{J} k_{\lambda_j}^\varepsilon(u) k_{\lambda_j}^\varepsilon(v)$. It was verified in \cite{CMP15} that $m=\widehat{K}$ satisfies the symbol estimates \eqref{eq:symbolest} with the constants $C_\kappa$ depending only on $\kappa$, $d$, and $\varepsilon$. Therefore, Theorem~\ref{thm:analytictheorem} can be applied and yields
\begin{equation*}
\sum_{j=1}^{J} |\mathcal{E}_{\lambda_j}^{\varepsilon}(f)|^2
\lesssim_{d,\varepsilon} \|f\|_{\textup{L}^2}^2 \|f\|_{\textup{L}^4}^4 \leq N^{4d}. \qedhere
\end{equation*}
\end{proof}

We now deduce Theorem~\ref{thm:cornerstheorem} from Propositions~\ref{prop:m1large}, \ref{prop:nmclose}, and \ref{prop:ebounded}.

\begin{proof}[Proof of Theorem~\ref{thm:cornerstheorem}]
We argue by contradiction. Recall the constant $\gamma_p$ from Proposition~\ref{prop:nmclose}. If Theorem~\ref{thm:cornerstheorem} does not hold, then for some $1<p<\infty$, $p\neq 2$ and some $d>1/\gamma_p$ there exists a measurable set $A\subseteq\mathbb{R}^{2d}$ with $\overline{\delta}_{2d}(A)>0$ such that the side lengths of corners in $A$, measured in the $\ell^p$-norm, avoid values from some positive sequence $(\lambda_j)_{j=1}^{\infty}$ converging to $+\infty$. We can sparsify this sequence if necessary, so that it satisfies $\lambda_{j+1}/\lambda_j \geq 2$ for each index $j$. Fix any positive integer $J$. It will be enough to consider finitely many scales $\lambda_1<\cdots<\lambda_J$.

By the definition of upper Banach density, for any fixed $0< \delta < \overline{\delta}_{2d}(A)$, there exists a number $N\geq \lambda_J$ for which there is $x_N\in\mathbb{R}^{2d}$ such that $|A\cap(x_N+[0,N]^{2d})| \geq \delta N^{2d}$. If we denote $A_N:=(-x_N+A)\cap[0,N]^{2d}$, then $A_N$ is a measurable subset of $[0,N]^{2d}$ with measure at least $\delta N^{2d}$ such that the side length of any corner inside $A_N$ avoids the values $\lambda_1,\ldots,\lambda_J$. The latter property immediately implies that $\mathcal{N}_{\lambda_j}(\mathbbm{1}_{A_N})=0$ for each $1\leq j\leq J$.

Let us apply the three auxiliary propositions with $f=\mathbbm{1}_{A_N}$, recalling that this is the indicator function of $A_N$. Note that $\lim_{\varepsilon\rightarrow0^+}\varepsilon^{d\gamma_p -1}=0$ by our choice of $d$. Therefore, if $\varepsilon>0$ is taken small enough (depending on $p,d,\delta$), then \eqref{eq:cepssim} and Propositions~\ref{prop:m1large} and \ref{prop:nmclose} give
\[
|\mathcal{E}_{\lambda_j}^{\varepsilon}(f)|
\geq c_1(\varepsilon) \mathcal{M}_{\lambda_j}^{1}(f) - |\mathcal{N}_{\lambda_j}(f) - \mathcal{M}_{\lambda_j}^{\varepsilon}(f)|
\gtrsim_{p,d,\delta} N^{2d}.
\]
Consequently,
\begin{equation}\label{eq:elowerbound}
\sum_{j=1}^{J} |\mathcal{E}_{\lambda_j}^{\varepsilon}(f)|^2 \gtrsim_{p,d,\delta} J N^{4d}.
\end{equation}
Combining \eqref{eq:eupperbound} and \eqref{eq:elowerbound}, and dividing by $N^{4d}$, we conclude that $J\lesssim_{p,d,\delta,\varepsilon} 1$.
Recalling that $J$ could have been taken arbitrarily large we arrive at the contradiction.
\end{proof}

It is worth observing that a variant of the bound \eqref{eq:eupperbound} with a constant $o(J)$ on the right-hand side would have been sufficient. It is plausible that such a bound could be easier to establish than the uniform estimates in Theorem~\ref{thm:analytictheorem} and Proposition~\ref{prop:ebounded}.
However, the scales $\lambda_1<\cdots<\lambda_J$ in such a bound must comprise an arbitrary lacunary sequence. For instance, obtaining a $o(J)$ estimate for consecutive dyadic scales $\lambda_j=2^j$ is considerably easier; compare with the closing remarks in the next section.

%%%%%%%%%%%%%%%%%%%%%

\section{Remarks on possible generalizations}
\label{sec:genremarks}
It is natural to ask if the generalization of Theorem~\ref{thm:cmptheorem} holds for $k$-term arithmetic progressions in $\mathbb{R}^d$,
\[
x,\ x+s,\ x+2s,\ \ldots,\ x+(k-1)s,
\]
and if Theorem~\ref{thm:cornerstheorem} extends to the generalized $k$-element corners in $(\mathbb{R}^d)^{k-1}$,
\[
(x_1,x_2,\ldots,x_{k-1}),\ (x_1+s,x_2,\ldots,x_{k-1}),\ (x_1,x_2+s,\ldots,x_{k-1}),\ \ldots,\ (x_1,x_2,\ldots,x_{k-1}+s).
\]
The result that any positive upper density subset of $\mathbb{Z}^{k-1}$ has to contain a nontrivial $k$-element corner is popularly known as the \emph{multidimensional Szemer\'{e}di theorem} and was first shown by Furstenberg and Katznelson \cite{FK78}.

The following proposition is a straightforward generalization of the aforementioned counterexample of Bourgain.
It prohibits $p$ from taking any integer value less than $k$.

\begin{proposition}\label{prop:counterexample}
Let $d,k,p$ be positive integers such that $p\leq k-1$. There exists a measurable set $A\subseteq \mathbb{R}^d$ of positive upper Banach density such that no $\lambda_0>0$ satisfies the property that for each $\lambda\geq\lambda_0$ one can find a $k$-term arithmetic progression $x$, $x+s$, \ldots, $x+(k-1)s$ in $A$ with $\|s\|_{\ell^p} = \lambda$.
\end{proposition}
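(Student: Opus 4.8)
The plan is to mimic Bourgain's parallelogram-law obstruction, but using a homogeneous polynomial of degree $p$ in place of $\|\cdot\|_{\ell^2}^2$. For integer $p \le k-1$, consider the function $P(x) := \sum_{i=1}^d x_i^p = \|x\|_{\ell^p}^p$ when restricted to, say, the positive orthant (or simply work with $P$ itself, whose sign issues for odd $p$ I address below). The key algebraic fact is that along any arithmetic progression $x, x+s, \dots, x+(k-1)s$, the sequence $j \mapsto P(x+js)$ is a polynomial in $j$ of degree at most $p \le k-1$; hence its $(p+1)$-st finite difference vanishes, and in particular the $k$-th finite difference vanishes. More to the point, I only need one linear relation: the second-order (or $p$-th order) finite difference identity shows that $P(x+js)$ for $j = 0, 1, \dots, p$ determines all later values, and crucially each $P(x+js)$ lies in a short interval once we know $P(x)$ lies in a short interval \emph{and} $\|s\|_{\ell^p}$ is controlled — wait, that last part needs $s$ itself controlled, which is exactly what we are trying to contradict. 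So the cleaner route is the finite-difference one: if all of $P(x), P(x+s), \dots, P(x+(p)s)$ were near integers, then because $P(x+js)$ is a degree-$\le p$ polynomial in $j$ with leading coefficient $P(s) = \|s\|_{\ell^p}^p$ (times the constant $p!$ from finite differencing), the $p$-th finite difference gives $p!\,\|s\|_{\ell^p}^p$ as an integer combination of the $P(x+js)$, hence close to an integer, which forbids $\|s\|_{\ell^p}^p$ (and thus $\|s\|_{\ell^p}$) from landing in a positive-measure set of values.

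\textbf{Construction of $A$.} Following Bourgain, let $A \subseteq \mathbb{R}^d$ be the union over positive integers $n$ of the thin slabs $\{x \in \mathbb{R}^d : \operatorname{dist}(P(x), \tfrac{1}{p!}\mathbb{Z}) \le c\}$ intersected with a suitable region where $P$ behaves well, for a small absolute constant $c > 0$ depending only on $k$. Actually it is simplest to take
\[
A := \Big\{ x \in \mathbb{R}^d : \big\| x_1^p + \cdots + x_d^p \big\| \le \tfrac{1}{10k} \Big\},
\]
where $\|t\|$ denotes distance to the nearest multiple of $\tfrac{1}{k!}$ (or of a fixed rational we choose at the end). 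One checks $A$ has positive upper Banach density: inside any large cube $x_0 + [0,N]^d$ with $x_0$ chosen so the cube sits in a region where $\partial_1 P$ is bounded away from $0$, the set $A$ occupies a fixed proportion of the measure by Fubini in the $x_1$-variable (the level sets of $P$ in $x_1$ are spaced regularly enough). Here the only care needed is for odd $p$: if $p$ is odd, $t \mapsto t^p$ is a bijection of $\mathbb{R}$ but $P$ can be negative; this causes no problem since we only need $P$ to be near a discrete set, and the density computation still goes through by translating the base point of the cube far out along a coordinate axis.

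\textbf{The obstruction.} Suppose $x, x+s, \dots, x+(k-1)s \in A$ with $s \ne 0$. Write $Q(j) := P(x + js)$, a polynomial in $j \in \mathbb{R}$ of degree exactly $p$ (unless $s = 0$) with leading coefficient $P(s) = \|s\|_{\ell^p}^p$ — for odd $p$ replace by $\sum s_i^p$, still homogeneous of degree $p$; the argument below bounds $|\sum s_i^p|$, from which one recovers that $\|s\|_{\ell^p}$ avoids a positive-measure set after noting $|\sum s_i^p| \le \|s\|_{\ell^p}^p$ and handling the reverse direction on a sub-orthant, or more simply by redefining $A$ with $\sum |s_i|^p$ throughout, which is exactly $\|\cdot\|_{\ell^p}^p$. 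The $p$-th finite difference satisfies
\[
p!\, P(s) \;=\; \sum_{j=0}^{p} (-1)^{p-j} \binom{p}{j} Q(j) \;=\; \sum_{j=0}^{p} (-1)^{p-j} \binom{p}{j} P(x+js).
\]
Since each $P(x+js)$ is within $\tfrac{1}{10k}$ of $\tfrac{1}{k!}\mathbb{Z}$ and $\sum_{j}\binom{p}{j} = 2^p \le 2^{k-1}$, the right-hand side is within $\tfrac{2^{k-1}}{10k} < \tfrac{1}{k!}$... this needs the constants tuned, but with $A$ defined using distance to $\tfrac{1}{C}\mathbb{Z}$ for $C$ large in terms of $k$ and the slab thickness $c$ small enough, $p!\,P(s) = p!\,\|s\|_{\ell^p}^p$ is forced to lie within a small neighborhood of $\tfrac{1}{C}\mathbb{Z}$. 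Consequently $\|s\|_{\ell^p}^p$, and hence $\|s\|_{\ell^p}$, cannot take values in $\bigcup_m (a_m, b_m)$ for a suitable sequence of disjoint intervals escaping to infinity (the preimages under $t \mapsto t^{p}$ of the complementary gaps around the points of $\tfrac{1}{p!\,C}\mathbb{Z}$). This contradicts the existence of $\lambda_0$.

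\textbf{Expected main obstacle.} The algebra (finite differences) and the counterexample structure are routine once set up; the only genuinely delicate point is verifying $\overline{\delta}_d(A) > 0$ cleanly while simultaneously keeping the "near a lattice" condition strong enough to survive the $2^{p}$-fold amplification in the finite-difference sum — i.e., choosing the slab width $c$ and lattice spacing so that the bad set of forbidden $\|s\|_{\ell^p}$-values still has nonempty interior and is unbounded. A secondary nuisance is the sign of $x_i^p$ for odd $p$, which I would eliminate at the outset by phrasing everything in terms of $\|x\|_{\ell^p}^p = \sum |x_i|^p$ and restricting the density computation to a cube deep inside a single orthant, where $\|\cdot\|_{\ell^p}^p$ is a genuine degree-$p$ polynomial and its $x_1$-level sets are regularly spaced.
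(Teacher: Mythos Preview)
Your approach is correct and is essentially the same as the paper's: the key identity $\sum_{j=0}^{p}(-1)^{p-j}\binom{p}{j}\|x+js\|_{\ell^p}^p = p!\,\|s\|_{\ell^p}^p$ (on the positive orthant) combined with a thin-slab set $A$ where $\|x\|_{\ell^p}^p$ is near an integer. The paper streamlines the two issues you flagged by taking $A\subseteq[0,\infty)^d$ from the outset (so $\|x\|_{\ell^p}^p$ is literally the polynomial $\sum x_i^p$, eliminating sign concerns) and choosing slab half-width $2^{-p-2}$ around the integers, so the finite-difference error is exactly $\sum_j\binom{p}{j}2^{-p-2}=1/4$ with no further tuning of constants.
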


\begin{proof}[Proof of Proposition~\ref{prop:counterexample}]
Take $x=(x_1,\ldots,x_d)$ and $s=(s_1,\ldots,s_d)$ in $\mathbb{R}^d$ such that $x+js$ has nonnegative coordinates for $j=0,\ldots,p$ and observe the identity
\begin{equation}\label{eq:binomialid}
\sum_{j=0}^{p} (-1)^{p-j} {p\choose j} \|x+js\|_{\ell^p}^p = p! \|s\|_{\ell^p}^p.
\end{equation}
It is a direct consequence of the scalar identity
\[
\sum_{j=0}^{p} (-1)^{p-j} {p\choose j} (\alpha+j\beta)^l = \begin{cases} 0 & \text{for } l=0,1,\ldots,p-1, \\ p! \beta^p & \text{for } l=p, \end{cases}
\]
applied with $l=p$, $\alpha=x_i$, $\beta=s_i$, $i=1,\ldots,d$, which in turn can be easily established by induction on $p$.

Led by the example for three-term progressions, we define
\begin{equation}\label{eq:longercounter}
A := \bigcup_{n=1}^{\infty} \big\{ x\in[0,\infty)^d : n-2^{-p-2} \leq \|x\|_{\ell^p}^{p} \leq n+2^{-p-2} \big\}.
\end{equation}
As before, the set $A$ is made up of parts of spherical shells, but this time with respect to the $\ell^p$-norm.
It is easy to see that it still satisfies $\overline{\delta}_d(A)>0$.

Suppose that $x,s\in\mathbb{R}^d$ are such that $x+js\in A$ for $j=0,1,\ldots,k-1$. We only need to consider the first $p+1$ terms of this progression. By construction, $\|x+js\|_{\ell^p}^p$ differs from some positive integer $n_j$ by at most $2^{-p-2}$. From \eqref{eq:binomialid} we see that $p!\|s\|_{\ell^p}^p$ differs from the integer $\sum_{j=0}^{p}(-1)^{p-j}{p \choose j}n_j$ by at most
\[
\sum_{j=0}^{p} {p\choose j} \big| \|x+js\|_{\ell^p}^p - n_j \big| \leq \sum_{j=0}^{p} {p \choose j} 2^{-p-2} = \frac{1}{4}.
\]
Consequently, $\|s\|_{\ell^p}$ cannot attain values in the set
\begin{equation}\label{eq:gapexceptions}
\bigcup_{n=1}^{\infty} \bigg( \Big(\frac{4n-3}{4p!}\Big)^{1/p}, \Big(\frac{4n-1}{4p!}\Big)^{1/p} \bigg),
\end{equation}
which is unbounded from above.
\end{proof}

Example \eqref{eq:longercounter} from the previous proof also leads to a counterexample for generalized corners, by considering
\[
\widetilde{A} := \big\{(x_1,x_2,\ldots,x_{k-1})\in(\mathbb{R}^d)^{k-1} : x_1 + 2x_2 + \cdots + (k-1) x_{k-1}\in A\big\}.
\]
Once again, this set has $\overline{\delta}_{(k-1)d}(\widetilde{A})>0$, but the $\ell^p$-norm of the side $s$ of each $k$-element corner in $\widetilde{A}$ cannot belong to the set \eqref{eq:gapexceptions}.

There is still a chance that Theorems~\ref{thm:cmptheorem} and \ref{thm:cornerstheorem} generalize to $k\geq 4$ and any $1\leq p\leq\infty$ other than $1,2,\ldots,k-1$, and $\infty$. However, the corresponding analogs of Theorem~\ref{thm:analytictheorem} would involve operators of complexity similar as to the so-called \emph{multilinear and simplex Hilbert transforms} (see \cite{KTZ15},\cite{Tao15},\cite{ZKr15}), for which no $\textup{L}^p$-boundedness results are known at the time of writing.
An encouraging sign is that the papers \cite{Tao15} and \cite{ZKr15} establish estimates for the truncations of these operators with constants $o(J)$ in the number of consecutive dyadic scales $J$, while \cite{DKT16} improves this bound to $J^{1-\epsilon}$ for some $\epsilon>0$. As one needs to consider arbitrary (and not only consecutive dyadic) scales for the intended application, we believe that generalizations to large values of $k$ are still out of reach of the currently available techniques.

%%%%%%%%%%%%%%%%%%%%%

\section*{Acknowledgments}
P. D. is supported by the Hausdorff Center for Mathematics.
V. K. is supported in part by the Croatian Science Foundation under the project 3526.
L. R. is supported by the School of Mathematics at the University of Bristol.
P. D. and V. K. are partially supported by the bilateral DAAD-MZO grant \emph{Multilinear singular integrals and applications}.
The authors would like to thank Christoph Thiele, Julia Wolf, and the members of the HARICOT seminar for useful discussions and helpful comments.

%%%%%%%%%%%%%%%%%%%%%

\end{document}